\numberwithin{equation}{section}
\newcommand{\R}{\mathbb R}
\newcommand{\C}{\mathbb C}
\def\TagOnRight
\def\R {\mathbb{R}}
\newcommand{\be}{\begin{equation}}
\newcommand{\ee}{\end{equation}}
\newcommand{\bea}{\begin{eqnarray}}
\newcommand{\eea}{\end{eqnarray}}
\newcommand{\Bea}{\begin{eqnarray*}}
\newcommand{\Eea}{\end{eqnarray*}}
\newcommand{\bt}{\begin{Theorem}}
\newcommand{\et}{\end{Theorem}}
\newcommand{\bpr}{\begin{Proposition}}
\newcommand{\epr}{\end{Proposition}}
\newcommand{\bl}{\begin{Lemma}}
\newcommand{\el}{\end{Lemma}}
\newcommand{\bi}{\begin{itemize}}
\newcommand{\ei}{\end{itemize}}
\newtheorem{Definition}{Definition}[section]
\newtheorem{Theorem}[Definition]{Theorem}
\newtheorem{Lemma}[Definition]{Lemma}
\newtheorem{Proposition}[Definition]{Proposition}
\newtheorem{Remark}[Definition]{Remark}
\begin{document}
\baselineskip16pt

\title[The NlS with harmonic potential in modulation spaces ]{The nonlinear Schr\"odinger  equations with Harmonic Potential in modulation spaces}
\author{Divyang G. Bhimani}
\address{Department of Mathematics\\
University of Maryland\\
College Park\\ 
MD 20742}
\email{dbhimani@math.umd.edu}
\subjclass[2010]{35Q55, 35L05, 42B35 (primary), 35A01 (secondary)}
\keywords{Hartree equation,  harmonic potential,  well-posedness, modulation spaces}

\maketitle
\begin{abstract} 
We study nonlinear Schr\"odinger $i\partial_tu-Hu=F(u)$   (NLSH)  equation associated to  harmonic oscillator $H=-\Delta +|x|^2$ in modulation spaces $M^{p,q}.$  When $F(u)= (|x|^{-\gamma}\ast |u|^2)u, $
we prove global well-posedness   for (NLSH)  in  modulation spaces $M^{p,p}(\mathbb R^d)$ 
$ (1\leq p  <  2d/(d+\gamma), 0<\gamma< \min \{ 2, d/2\}).$  When $F(u)= (K\ast |u|^{2k})u$ with $K\in \mathcal{F}L^q $ (Fourier-Lebesgue spaces) or $M^{\infty,1}$ (Sj\"ostrand's class) or $M^{1, \infty},$ some  local and global well-posedness  for (NLSH)   are obtained in some   modulation spaces.
When $F$ is real entire and $F(0)=0$, we prove local well-posedness  for (NLSH) in $M^{1,1}.$   As a consequence, we can get  local and global well-posedness  for (NLSH)   in a function spaces$-$which  are larger than usual $L^p_s-$Sobolev spaces.
\end{abstract}

\section{Introduction}
We study Cauchy problem for the  nonlinear Schr\"odinger  equation with the harmonic  oscillator $H=-\Delta +|x|^2$:
\begin{eqnarray}\label{HTEH}
i\partial_{t}u(t,x) -Hu(t,x) = F(u), \   u(x,0)= u_0(x),
\end{eqnarray}
where $u:\mathbb R_t \times \mathbb R_x^d \to \mathbb C, u_0:\mathbb R^d \to \mathbb C, \Delta = \sum_{1}^{d} \frac{\partial^2}{\partial^2_{x_i}}$ ,  and $F:\mathbb C \to \mathbb C$ is a nonlinearity.  

Mainly we  consider nonlinearity of the Hartree  and power type. Specifically, we study \eqref{HTEH}  with the  Hartree type nonlinearity
\begin{eqnarray}\label{htn}
F(u)=(K\ast |u|^{2k}) u,
\end{eqnarray}
where  $\ast$ denotes the convolution in $\mathbb R^d, k \in \mathbb N,$ and  
$K$ is of the following type:
\begin{eqnarray}\label{hk}
\   \ \ \ \ \ \ \ \ \ \ \ \  \ \ \ \   \   \  \   \ K(x)= \frac{\lambda}{|x|^{\gamma}}, (\lambda \in \mathbb R, \gamma >0, x\in \mathbb R^{d}),
\end{eqnarray}
\begin{eqnarray}\label{dk}
\ \ \ \ \ K\in \mathcal{F}L^q(\mathbb R^d) \ (1<q< \infty),
\end{eqnarray}
\begin{eqnarray}\label{sc}
K\in M^{\infty,1}(\mathbb R^d),
\end{eqnarray}
\begin{eqnarray}\label{di}
K\in M^{1,\infty}(\mathbb R^d),
\end{eqnarray}
where $\mathcal{F}L^{q}$ is a Fourier-Lebesgue space, and   $M^{\infty,1}(\mathbb R^d)  \supset \mathcal{F}L^1(\mathbb R^d)$ and $L^1(\mathbb R^d) \subset M^{1, \infty}(\mathbb R^d)$ are modulation spaces (see Definition \ref{ms} below). The homogeneous kernel of the form \eqref{hk} is known as Hartree potential. The kernel of the form \eqref{sc} is sometimes called Sj\"ostrand class (particular modulation space).  
We also study \eqref{HTEH}  when $F$ is  real entire and $F(0)=0$ (see Definition \ref{red} below).
 In this case   power-type  nonliearity  
\[F(u)= \pm |u|^{2k}u \ (k\in \mathbb N)\]
is covered, and in particular, when $F(u)=-|u|^2u,$ equation \eqref{HTEH}
is  the well-known 
Gross-Pitaevskii equation. 

The harmonic oscillator (also known as  Hermite operator) $H$ is a fundamental operator in quantum physics and in  analysis \cite{ST}.  Equation \eqref{HTEH} models Bose-Einstein condensates with attractive interparticle  interactions under a magnetic trap \cite{bsh, ts, jz, dgp}. The isotropic harmonic potential $|x|^2$ describes a magnetic field whose role is to confine the movement of particles \cite{bsh, ts, tw}.  A class of NLS with a ``nonlocal'' nonlinearity that we call ``Hartree type" occurs in the modeling of quantum semiconductor devices (see \cite{rc2} and the references therein, cf. \cite{pc}).

It is known that the free Schr\"odinger propagator $e^{it\Delta}$  is bounded on Lebesgue spaces $L^p(\mathbb R^d)$ if and only if $p=2.$  Hence, we cannot expect to solve Schr\"odinger equation on $L^p-$spaces.  This has inspired to study in other  function spaces  (e.g., modulation spaces $M^{p,q}$) arising in harmonic analysis.  In fact, in contrast to $L^p-$spaces,  in  \cite{bao, benyi}   it is  proved that the Schr\"odinger propagator $e^{it(-\Delta)^{\frac{\alpha}{2}}} \ (0 \leq \alpha \leq 2)$ is  bounded  on $M^{p,q}(\mathbb R^d)$ for all $1\leq p, q \leq \infty.$   Local well-posedness  results of the corresponding nonlinear equations, with nonlinearity of power-type, or more generic real entire, were obtained  in \cite{ambenyi, dgb-pkr}.  And global well-posedness results, with nonlinearity of power or Hartree type, were obtained in \cite{wang2, dgb, bao}. We refer to excellent survey \cite{rsw} and the reference therein for details.

Coming back to harmonic oscillator  $H,$ we note  that  well-posedness for \eqref{HTEH} with  Hartree and  power type nonlinearity were obtained in  the energy space in \cite{rc, rc1, rc2, rc, rc3, pc,  jz, ts}.  Recently Bhimani-Balhara-Thangavelu    \cite{drt} have proved that   Schr\"odinger  propagator associated with the harmonic potential  (see \eqref{dhm}  and  Theorems  \ref{mso} below) is bounded on $M^{p,p}(\mathbb R^d) \ (1\leq p < \infty)$ (cf. \cite{kks, ec}).  However,  there is no work so far for  nonlinear Schr\"odinger  equation with  harmonic potential in modulation spaces.  We also  note that Cauchy data in modulation spaces are rougher (see Proposition \ref{exa} below) than  $L^p-$Sobolev spaces (see definition in Subsection \ref{ns} below).  Taking these  considerations into account, we are inspired study \eqref{HTEH}  in modulation spaces.  Specifically, we have following theorem.

\begin{Theorem}\label{mt}  Let    $F(u)$   and  $K$ be given by  \eqref{htn} and \eqref{hk}  respectively with $k=1,$ and 
 $ 0<\gamma < \text{min} \{2, d/2\}, d\in \mathbb N$.
Assume that $u_{0}\in M^{p,p}(\mathbb R^{d}) $  where $ 1\leq p < \frac{2d}{d+ \gamma}.$Then
 there exists a  unique global solution of \eqref{HTEH} such that $$u\in C([0,\infty), M^{p,p}(\mathbb R^{d})) \cap L^{8/\gamma}_{loc}([0,\infty), L^{4d/(2d-\gamma)} (\mathbb R^d)) .$$
\end{Theorem}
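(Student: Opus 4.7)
The plan is to run a contraction mapping argument on the Duhamel formulation
\[
u(t)=e^{-itH}u_0 - i\int_0^t e^{-i(t-s)H}\,(|x|^{-\gamma}\ast|u(s)|^2)u(s)\,ds,
\]
coupled with $L^2$-mass conservation for globalization. I would work in the resolution space
\[
X_T:=L^\infty\bigl([0,T];M^{p,p}(\mathbb{R}^d)\bigr)\cap L^q\bigl([0,T];L^r(\mathbb{R}^d)\bigr),\qquad (q,r)=\bigl(8/\gamma,\,4d/(2d-\gamma)\bigr).
\]
Since $2/q+d/r=d/2$, the pair $(q,r)$ is Schr\"odinger-admissible. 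Koch--Tataru-type dispersive bounds for the Hermite propagator, iterated across focal times, give homogeneous and inhomogeneous Strichartz estimates for $e^{-itH}$ on any compact time interval, which is exactly what the $L^{8/\gamma}_{\mathrm{loc}}$-conclusion requires.

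For the Strichartz component, the Hardy--Littlewood--Sobolev and H\"older inequalities are calibrated so that $\|(|x|^{-\gamma}\ast|u|^2)u\|_{L^{r'}}\lesssim\|u\|_{L^r}^3$: writing a H\"older exponent $A$ with $1/A+1/r=1/r'$ and applying HLS $\|\,|x|^{-\gamma}\ast|u|^2\|_{L^A}\lesssim\|u\|_{L^r}^2$ forces precisely $r=4d/(2d-\gamma)$. A time-H\"older then gives $\|F(u)\|_{L^{q'}_T L^{r'}}\lesssim T^{1-\gamma/2}\|u\|_{L^q_T L^r}^3$, with the positive power of $T$ coming from $3q'<q\iff \gamma<2$. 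Combined with $\|e^{-itH}u_0\|_{L^q_TL^r}\lesssim\|u_0\|_{L^2}\lesssim\|u_0\|_{M^{p,p}}$ via the modulation-space embedding $M^{p,p}\hookrightarrow M^{2,2}=L^2$ (valid since $p<2d/(d+\gamma)\le 2$), this closes the $L^q_T L^r$ estimate.

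For the $M^{p,p}$ component, the $M^{p,p}$-boundedness of $e^{-itH}$ from \cite{drt} handles the linear term, and the nonlinear contribution reduces to the trilinear estimate
\[
\bigl\|(|x|^{-\gamma}\ast|u|^2)u\bigr\|_{M^{p,p}}\;\lesssim\;\|u\|_{L^r}^2\,\|u\|_{M^{p,p}}.
\]
The natural route is to (i) show that $V:=|x|^{-\gamma}\ast|u|^2$ lies in a pointwise-multiplier space for $M^{p,p}$ (e.g.\ Sj\"ostrand's class $M^{\infty,1}$, which acts on every modulation space) with $\|V\|_{M^{\infty,1}}\lesssim\|u\|_{L^r}^2$ via HLS-type arguments, and (ii) apply the corresponding module property. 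Gr\"onwall on $\|u(t)\|_{M^{p,p}}$, using $\int_0^T\|u(s)\|_{L^r}^2\,ds\lesssim T^{1-2/q}\|u\|_{L^q_T L^r}^2<\infty$, then keeps the $M^{p,p}$-norm finite on any compact interval. A standard contraction in a small ball of $X_T$ yields a unique local solution; because the local existence time can be made to depend only on $\|u_0\|_{L^2}=\|u(t)\|_{L^2}$ (conserved), iteration produces a global solution, and the Gr\"onwall estimate propagates the $M^{p,p}$ bound forward.

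The principal obstacle is the trilinear Hartree estimate in $M^{p,p}$: multiplication and convolution inequalities in modulation spaces are considerably more delicate than their Lebesgue counterparts, and the correct ``module''-type bound must be extracted by a careful decomposition that separates an HLS bound on $V$ (giving the $\|u\|_{L^r}^2$ factor) from a genuinely modulation-space bound on $u$ (giving the $\|u\|_{M^{p,p}}$ factor). The remaining ingredients -- $L^2$-subcriticality for $\gamma<2$, mass conservation for the Hartree flow, Strichartz for $H$, and the $M^{p,p}$-boundedness of $e^{-itH}$ -- are either classical or supplied by \cite{drt}, and should go through without difficulty.
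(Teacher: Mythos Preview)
Your overall architecture---Duhamel, Strichartz for $e^{-itH}$, $L^2$-mass conservation, and a Gr\"onwall argument to propagate the $M^{p,p}$ norm---is exactly the paper's plan. The difference is structural: the paper first runs a self-contained contraction in $C([0,T];M^{p,p})$ (using a \emph{pure} modulation-space trilinear bound $\|(K\ast|f|^2)f\|_{M^{p,p}}\lesssim\|f\|_{M^{p,p}}^3$, Lemma~\ref{iml}), then separately establishes global $L^2\cap L^{8/\gamma}_{\mathrm{loc}}L^{4d/(2d-\gamma)}$ existence by Strichartz (Proposition~\ref{miD}), and only afterwards combines the two via Gr\"onwall. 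You instead try to contract directly in the joint space $X_T$, which is fine in spirit but forces you to confront the mixed trilinear estimate at the outset.

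The genuine gap is in your proposed mechanism for that estimate. You want $\|V\|_{M^{\infty,1}}\lesssim\|u\|_{L^r}^2$ ``via HLS-type arguments,'' but HLS only gives $V=|x|^{-\gamma}\ast|u|^2\in L^{2d/\gamma}$, and no Lebesgue space $L^s$ with $s<\infty$ embeds into $M^{\infty,1}$ (Sj\"ostrand's class requires frequency-side summability that a bare $L^s$ bound cannot supply). The paper's fix is to use the smaller multiplier algebra $\mathcal{F}L^1$ (which still acts on every $M^{p,q}$, Theorem~\ref{pl}) and to exploit the explicit shape $\widehat{K}(\xi)=c|\xi|^{-(d-\gamma)}$: splitting $\widehat{K}=k_1\chi_{\{|\xi|\le1\}}+k_2\chi_{\{|\xi|>1\}}$ with $k_1\in L^1$ and $k_2\in L^q$ ($d/(d-\gamma)<q\le2$), one gets
\[
\|K\ast|u|^2\|_{\mathcal{F}L^1}\;\lesssim\;\|u\|_{L^2}^2+\|u\|_{L^{2q}}^2,
\]
not a single $\|u\|_{L^r}^2$. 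The first term is handled by mass conservation, the second by choosing $(2\beta,2q)$ admissible (possible iff $\gamma<2$ and $\gamma<d/2$) and applying Strichartz; H\"older in time then feeds the resulting $\int_0^t\|u(\tau)\|_{L^{2q}}^2\|u(\tau)\|_{M^{p,p}}\,d\tau$ into Gr\"onwall. So your outline survives, but the key step must be rerouted through $\mathcal{F}L^1$ and the low/high-frequency decomposition of $\widehat{K}$ rather than an $M^{\infty,1}$ bound coming from HLS.
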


We note that   up to now cannot know  \eqref{HTEH} is globally well-posed in   $L^{p}(\mathbb R^d)$ but  it is in  $M^{p,p}(\mathbb R^d) \subsetneq L^p(\mathbb R^d)$ for $1\leq p <2.$
Since $L^p-$Sobolev spaces $L^p_s(\mathbb R^d)\subsetneq  M^{p,p}(\mathbb R^d) $ for $s\geq d(\frac{2}{p}-1),1<p\leq 2$ and $L^1_s(\mathbb R^d)\subsetneq  M^{1,1}(\mathbb R^d) $ for $s> d$ (see  Proposition \ref{exa} below).  Theorem \ref{mt} reveals that  we can get the global well-posedness for \eqref{HTEH}  with Cauchy data  rougher than $L^p-$Sobolev spaces.  We do not know whether the range of $p$ in Theorem \ref{mt} is sharp or not for the Hartree potential.  However, if we take  potential from Sj\"ostrand's class, the range of $p$ can be improved.  Since modulation spaces  enlarges as their exponents are increasing (see Lemma \ref{rl} \eqref{ir} below), we  can solve   \eqref{HTEH} with Cauchy data in relatively more low regularity spaces.  Specifically, we  have  following theorem.
\begin{Theorem}\label{mtv} Let    $F(u)$  be given by  \eqref{htn} with $k=1.$
 Assume that  $K$ is given by \eqref{sc}
  and $u_{0}\in M^{p,p}(\mathbb R^{d}) \ (1\leq p \leq 2).$  Then there exists a  unique global solution of \eqref{HTEH} such that $$u\in C([0,\infty), M^{p,p}(\mathbb R^{d})).$$
\end{Theorem}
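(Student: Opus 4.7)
The plan is to combine the boundedness of the Hermite semigroup $e^{-itH}$ on $M^{p,p}(\mathbb R^d)$ stated in Theorem \ref{mso} (from \cite{drt}) with a suitable trilinear estimate for the Hartree nonlinearity and $L^2$-mass conservation. First I would recast \eqref{HTEH} as the Duhamel integral equation
\[
u(t)=e^{-itH}u_0-i\int_0^t e^{-i(t-s)H}\big[(K\ast|u(s)|^2)\,u(s)\big]\,ds,
\]
and look for a fixed point of the associated map in $C([0,T],M^{p,p}(\mathbb R^d))$ on a small ball. The linear term is already controlled by $\|u_0\|_{M^{p,p}}$ via \cite{drt}, so everything reduces to handling the nonlinear Duhamel term in $M^{p,p}$.

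The heart of the argument is the trilinear estimate
\[
\|(K\ast|v|^2)\,u\|_{M^{p,p}}\lesssim \|K\|_{M^{\infty,1}}\,\|v\|_{L^2}^2\,\|u\|_{M^{p,p}},\qquad 1\le p\le 2,
\]
which I would derive by chaining three standard facts about modulation spaces: (i) Sj\"ostrand's multiplication theorem $\|fg\|_{M^{p,p}}\lesssim\|f\|_{M^{\infty,1}}\|g\|_{M^{p,p}}$, applied with $f=K\ast|v|^2$ and $g=u$; (ii) the Young-type convolution relation $M^{\infty,1}\ast M^{1,\infty}\hookrightarrow M^{\infty,1}$, which gives $\|K\ast h\|_{M^{\infty,1}}\lesssim\|K\|_{M^{\infty,1}}\|h\|_{M^{1,\infty}}$; and (iii) the continuous embedding $L^1\hookrightarrow M^{1,\infty}$, whence $\||v|^2\|_{M^{1,\infty}}\lesssim\||v|^2\|_{L^1}=\|v\|_{L^2}^2$.

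Using this estimate together with the embedding $M^{p,p}\hookrightarrow L^2$ (valid for $1\le p\le 2$ since $M^{p,p}\hookrightarrow M^{2,2}=L^2$), a routine Banach contraction on a small ball of $C([0,T],M^{p,p})$ with $T=T(\|u_0\|_{M^{p,p}})>0$ produces a unique local solution. To extend it globally I would appeal to the conservation of $L^2$ mass, $\|u(t)\|_{L^2}=\|u_0\|_{L^2}$, proved by multiplying \eqref{HTEH} by $\bar u$, integrating, and taking imaginary parts (the low regularity of $u_0$ is handled by the usual density/smoothing argument since $M^{p,p}\subset L^2$). Substituting $v=u$ in the trilinear bound and replacing $\|u(t)\|_{L^2}$ by $\|u_0\|_{L^2}$ makes the nonlinearity \emph{linear} in $\|u\|_{M^{p,p}}$, so Gronwall applied to
\[
\|u(t)\|_{M^{p,p}}\le C\|u_0\|_{M^{p,p}}+C\|K\|_{M^{\infty,1}}\|u_0\|_{L^2}^2\int_0^t\|u(s)\|_{M^{p,p}}\,ds
\]
yields an a priori bound on every bounded interval, precluding finite-time blow-up and promoting the local solution to a global one.

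The main technical obstacle is the trilinear estimate; while Sj\"ostrand multiplication and the convolution relation are standard, the argument hinges on routing the quadratic quantity $|v|^2$ into an $L^1$-based space via $L^1\hookrightarrow M^{1,\infty}$ so that mass conservation can take over and remove the super-linear growth. A secondary concern is quantifying the time dependence of the $M^{p,p}$-operator norm of $e^{-itH}$ from \cite{drt}, since the Hermite propagator is not translation-invariant; local uniformity of the bound on $[0,T]$ is enough for both the contraction and the Gronwall step to close.
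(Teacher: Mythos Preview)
Your proposal is correct and follows essentially the same route as the paper: the paper's Lemma \ref{lcg} is precisely your trilinear estimate, proved via the same three ingredients (Sj\"ostrand multiplication $M^{\infty,1}\cdot M^{p,p}\hookrightarrow M^{p,p}$, the convolution relation $M^{\infty,1}\ast M^{1,\infty}\hookrightarrow M^{\infty,1}$, and $L^1\hookrightarrow M^{1,\infty}$), and the local contraction plus mass-conservation-and-Gronwall globalization are carried out exactly as you describe. Your secondary concern is in fact a non-issue here, since Theorem \ref{mso} gives the stronger statement $\|e^{itH}f\|_{M^{p,p}}=\|f\|_{M^{p,p}}$ for all $t$, so the propagator bound is an isometry and no time-dependence enters.
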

 Now we note  that formally the  solution of  \eqref{HTEH} satisfies (see for e.g., \cite{rc2, jz, pc}) the conservation of mass:
$$\|u(t)\|_{L^{2}}=\|u_0\|_{L^{2}}  \ \ (t\in\mathbb R^+),$$
and exploiting this mass conservation law,  below Theorem \ref{mso},   and techniques from  time-frequency analysis,  we prove global existence results (above Theorems \ref{mt} and \ref{mtv})  for  \eqref{HTEH}  with Hartree type nonlineariy.  For power type nonlinearity, it remains open problem to get the global existence (cf. \cite[p. 280]{rsw}), however, we can prove local existence  for \eqref{HTEH}  (see Theorem \ref{mtp} below). We also prove local well-posedness results  for \eqref{HTEH}  for Hartree type nonlineariy (see Theorem \ref{wht1} below). Specifically, we have  following theorems.
\begin{Theorem} \label{mtp} Let $F$ is real entire and $F(0)=0.$
 Assume that $u_{0}\in M^{1,1}(\R^d) .$     Then there exists $T^{*}=T^{\ast}(\|u_{0}\|_{M^{1,1}})$ such that \eqref{HTEH} has a unique solution $u\in C([0, T^{*}), M^{1,1} (\mathbb R^d)).$  
\end{Theorem}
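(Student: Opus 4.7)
The natural strategy is a Banach fixed-point argument applied to the Duhamel formulation
\begin{equation*}
\Phi(u)(t) \;=\; e^{-itH} u_0 \;-\; i \int_{0}^{t} e^{-i(t-s)H}\, F(u(s))\, ds
\end{equation*}
on the complete metric space $X_T = C([0,T], M^{1,1}(\mathbb R^d))$ equipped with the supremum norm $\|u\|_{X_T}=\sup_{t\in[0,T]}\|u(t)\|_{M^{1,1}}$, restricted to a suitable closed ball. The three ingredients I would stitch together are: (i) boundedness of the harmonic oscillator propagator $e^{-itH}$ on $M^{1,1}(\mathbb R^d)$, which is precisely the Bhimani--Balhara--Thangavelu result (Theorem \ref{mso}) referenced in the introduction, giving a uniform bound $\|e^{-itH} f\|_{M^{1,1}}\le C_1 \|f\|_{M^{1,1}}$ on any compact time window; (ii) the fact that $M^{1,1}$ (the Feichtinger algebra) is a Banach algebra under pointwise multiplication, so $\|uv\|_{M^{1,1}}\le C_0\|u\|_{M^{1,1}}\|v\|_{M^{1,1}}$ and moreover complex conjugation is an isometry on $M^{1,1}$; (iii) a nonlinear estimate that turns $F(0)=0$ plus real entireness of $F$ into an estimate of the form $\|F(u)\|_{M^{1,1}}\le G(\|u\|_{M^{1,1}})$ with $G$ entire and $G(0)=0$.

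\textbf{Step 1 (nonlinear estimate).} Write $F(z)=\sum_{j+k\ge 1} c_{j,k} z^j\bar z^k$, where the series converges absolutely on $\mathbb C$. Iterating the algebra estimate yields $\|u^j\bar u^k\|_{M^{1,1}}\le C_0^{\,j+k-1}\|u\|_{M^{1,1}}^{j+k}$, so
\begin{equation*}
\|F(u)\|_{M^{1,1}} \;\le\; \frac{1}{C_0}\sum_{j+k\ge 1}|c_{j,k}|\,\bigl(C_0 \|u\|_{M^{1,1}}\bigr)^{j+k} \;=:\; G(\|u\|_{M^{1,1}}),
\end{equation*}
with $G$ entire and $G(0)=0$. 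By the same device applied to the difference formula $F(u)-F(v)=(u-v)\!\int_0^1\!\partial_z F(\tau u+(1-\tau)v)\,d\tau + (\bar u-\bar v)\!\int_0^1\!\partial_{\bar z}F(\tau u+(1-\tau)v)\,d\tau$, one obtains a Lipschitz-type bound
\begin{equation*}
\|F(u)-F(v)\|_{M^{1,1}} \;\le\; L(R)\,\|u-v\|_{M^{1,1}}, \qquad \|u\|_{M^{1,1}}, \|v\|_{M^{1,1}}\le R,
\end{equation*}
where $L(R)$ is another convergent power series in $R$ with $L(0)=0$.

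\textbf{Step 2 (self-map and contraction).} Combining (i) and Step 1 gives
\begin{equation*}
\|\Phi(u)\|_{X_T} \;\le\; C_1\|u_0\|_{M^{1,1}} + C_1 T\,G\bigl(\|u\|_{X_T}\bigr), \qquad
\|\Phi(u)-\Phi(v)\|_{X_T} \;\le\; C_1 T\, L(R)\,\|u-v\|_{X_T}.
\end{equation*}
Setting $R = 2C_1\|u_0\|_{M^{1,1}}$ and choosing $T^*=T^*(\|u_0\|_{M^{1,1}})>0$ small enough so that $C_1 T^*\,G(R)\le R/2$ and $C_1 T^*\,L(R)\le 1/2$, one checks that $\Phi$ stabilizes the closed ball $B_R\subset X_{T^*}$ and is a strict contraction there. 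The Banach fixed-point theorem then yields a unique $u\in C([0,T^*), M^{1,1}(\mathbb R^d))$ solving the Duhamel formulation, which is the required mild solution.

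\textbf{Expected obstacle.} The routine contraction machinery is standard; the delicate point is that the algebra constant $C_0$ and the propagator bound $C_1$ must suffice to make the radius-of-convergence requirement $G(R)<\infty, L(R)<\infty$ for the chosen $R$. This is precisely where real entireness of $F$ (rather than merely real analytic) is essential, since it guarantees that $G$ and $L$ are defined on all of $[0,\infty)$, hence $T^*$ depends only on $\|u_0\|_{M^{1,1}}$ and not on any a priori smallness. The other subtlety worth verifying carefully is time-continuity of $t\mapsto F(u(t))$ in $M^{1,1}$, which follows from the Lipschitz bound in Step 1 together with the continuity of $t\mapsto u(t)$ in $M^{1,1}$; this ensures the Bochner integral in the Duhamel formula makes sense and $\Phi(u)\in C([0,T^*], M^{1,1})$.
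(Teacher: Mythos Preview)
Your proposal is correct and follows essentially the same route as the paper: Duhamel formulation, Banach contraction on a closed ball in $C([0,T],M^{1,1})$, the propagator isometry on $M^{1,1}$ (Theorem~\ref{mso}), and nonlinear/Lipschitz estimates coming from the Banach algebra property of $M^{1,1}$ together with the real-entire expansion of $F$. The only cosmetic differences are that the paper expands $F$ in the real variables $(u_1,u_2)$ and packages the nonlinear and Lipschitz bounds as a separate proposition (Proposition~\ref{Fu-Fv}, with the majorant $\tilde F$), whereas you expand in $(z,\bar z)$ and derive the estimates directly; and the paper spells out the extension to a maximal interval $[0,T^*)$ by iterating the local argument, which you leave implicit.
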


\begin{Theorem}\label{wht1}
Let    $F(u)$  be given by  \eqref{htn}.
\begin{enumerate}
\item Assume that $K$ is given by \eqref{dk}  and
   $u_{0}\in M^{1,1}(\R^d).$   Then there exists $T^{*}=T^{\ast}(\|u_{0}\|_{M^{1,1} })$ such that \eqref{HTEH} has a unique solution $u\in C([0, T^{*}), M^{1,1} (\mathbb R^d)).$ 
   
\item Assume that $K$ is given by \eqref{dk}   with $ 1<q<2, k=1$ and  $u_{0}\in M^{\frac{2r}{2r-1},\frac{2r}{2r-1}}(\mathbb R^{d}) \  (q<r< \infty).$   Then there exists $T^{*}=T^{\ast}(\|u_{0}\|_{ M^{\frac{2r}{2r-1},\frac{2r}{2r-1}}})$ such that \eqref{HTEH} has a unique solution $u\in C([0, T^{*}), M^{\frac{2r}{2r-1},\frac{2r}{2r-1}}(\mathbb R^{d})).$ 
   
   \item Assume that $K$ is given by \eqref{di} and
   $u_{0}\in M^{1,1}(\R^d).$   Then there exists $T^{*}=T^{\ast}(\|u_{0}\|_{M^{1,1} })$ such that \eqref{HTEH} has a unique solution $u\in C([0, T^{*}), M^{1,1} (\mathbb R^d)).$  
\end{enumerate}  
\end{Theorem}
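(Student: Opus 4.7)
The plan is to apply the Banach fixed-point theorem to the Duhamel integral equation
$$u(t) = e^{-itH}u_0 - i\int_0^t e^{-i(t-s)H}\bigl[(K\ast|u|^{2k})u\bigr](s)\,ds$$
on the closed ball $X_T(R) = \{u \in C([0,T]; M^{p,p}) : \|u\|_{L^\infty_T M^{p,p}} \leq R\}$ equipped with the distance $d(u,v) = \|u-v\|_{L^\infty_T M^{p,p}}$, choosing the exponent $p$ according to the case. Boundedness of the Hermite propagator $e^{-itH}$ on $M^{p,p}$, uniformly on compact time intervals, is supplied by Theorem \ref{mso}. The heart of the matter is a $(2k{+}1)$-linear estimate
$$\|(K\ast|u|^{2k})u\|_{M^{p,p}} \leq C(K)\,\|u\|_{M^{p,p}}^{2k+1}$$
together with its Lipschitz version; once these are in hand, the standard contraction argument produces a unique local solution on $[0,T^*)$ with $T^*$ depending only on $\|u_0\|_{M^{p,p}}$.

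For parts (1) and (3) one works at $p=1$, where $M^{1,1}$ is the Feichtinger algebra $S_0$ and is closed under both pointwise multiplication and convolution; in particular $|u|^{2k}\in M^{1,1}$. In part (3) the convolution theorem for modulation spaces gives $M^{1,\infty}\ast M^{1,1}\hookrightarrow M^{1,1}$ directly, and the multiplication algebra property finishes the estimate. In part (1) I would pass to the Fourier side: using $\hat K\in L^q$ together with the embedding $M^{1,1}\hookrightarrow L^{q'}$ and H\"older yields $K\ast|u|^{2k}\in\mathcal{F}L^1$, and then combining the Feichtinger embedding $\mathcal{F}L^1\hookrightarrow M^{\infty,1}$ with the Sj\"ostrand multiplier relation $M^{\infty,1}\cdot M^{1,1}\hookrightarrow M^{1,1}$ completes the trilinear bound.

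For part (2) one has $p=2r/(2r-1)\in(1,2)$ with $1<q<2$ and $q<r<\infty$, so $M^{p,p}$ is no longer an algebra. I would instead combine a H\"older-type bilinear inequality for modulation spaces (to control $|u|^{2}$) with a Young-type convolution inequality of the form $M^{p_1,q_1}\ast M^{p_2,q_2}\hookrightarrow M^{p_0,q_0}$ to handle the convolution with $K$, and then pair $K\ast|u|^{2}$ with $u\in M^{p,p}$ through an appropriate multiplication estimate, verifying that the exact condition $q<r<\infty$ is what makes the exponent bookkeeping close.

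The main obstacle is part (2): with $p$ strictly between $1$ and $2$ one loses the Feichtinger-algebra structure and must balance three distinct modulation-space norms simultaneously through H\"older and Young-type inequalities, all while the kernel sits in the Fourier-Lebesgue space $\mathcal{F}L^q$ rather than directly in a modulation space. In parts (1) and (3) the algebra structure of $M^{1,1}$ together with the Sj\"ostrand multiplier or the modulation-space Young inequality does most of the work, and the Lipschitz version of the multilinear bound follows from the same bilinearization of the nonlinearity.
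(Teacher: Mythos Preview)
Your overall strategy---Duhamel formulation plus Banach contraction on a ball in $C([0,T];M^{p,p})$, using the uniform $M^{p,p}$-boundedness of $e^{-itH}$ from Theorem~\ref{mso} and a $(2k{+}1)$-linear estimate on the nonlinearity---is exactly the paper's approach. For parts (1) and (3) your reasoning is essentially what the paper does.

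The one place where the paper's argument is sharper than your sketch is the multilinear estimate in part~(2). Rather than trying to thread the nonlinearity through modulation-space H\"older and Young inequalities directly (which, as you note, is delicate because $M^{p,p}$ is not an algebra for $1<p<2$), the paper systematically exploits the $\mathcal{F}L^{1}$-module property
\[
\|gh\|_{M^{p,q}}\lesssim \|g\|_{\mathcal{F}L^{1}}\|h\|_{M^{p,q}}
\]
(Theorem~\ref{pl}\eqref{mp}) with $g=K\ast|u|^{2}$, reducing everything to showing $\widehat{K}\cdot\widehat{|u|^{2}}\in L^{1}$. To achieve this the paper splits $\widehat{K}\in L^{q}$ as $k_{1}+k_{2}\in L^{1}+L^{r}$ for any $r>q$ (Proposition~\ref{lsd}); the $k_{1}$ piece is paired with $\widehat{|u|^{2}}\in L^{\infty}$, while for the $k_{2}$ piece one uses H\"older, Young on $\widehat{|u|^{2}}=\hat{u}\ast\hat{\bar u}$, and the embedding $\mathcal{F}L^{r_{1}}\hookleftarrow M^{2,r_{1}}$ with $r_{1}=\tfrac{2r}{2r-1}$. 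This is precisely where the index $\tfrac{2r}{2r-1}$ and the constraint $q<r$ enter. Your proposal to place $K$ itself in a modulation space via Young-type inequalities may not close the exponents cleanly; the $\mathcal{F}L^{1}$-module route, combined with the $L^{1}+L^{r}$ decomposition of $\widehat{K}$, is the mechanism that makes the bookkeeping work.
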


Now we briefly mention the mathematical literature. 
Carles-Mauser-Stimming \cite{rc3} and Cao-Carles \cite{pc} have studied  well-posedness  for  \eqref{HTEH} with Hartree type nonlinearity. Zhang \cite{jz} and  Carles \cite{rc, rc1, rc2} have studied  well-posedness  for  \eqref{HTEH} with power type nonlinearity.  We would like to mention  that  so far all previous  authors have studied \eqref{HTEH} in the energy space
\[\Sigma= \left\{ f\in \mathcal{S}'(\mathbb R^d): \|f\|_{\Sigma}:=\|f\|_{L^2} + \|\nabla f\|_{L^2} + \|xf\|_{L^2} < \infty \right\}.\]

Finally, we note that  the existence  of solution  for \eqref{HTEH}  is shown under very low regularity assumption for the  initial data. Specifically, to see how typical  Cauchy data  Theorems \ref{mt}, \ref{mtv}, and \ref{mtp} can handle, see  Proposition \ref{exa}  and Lemma \ref{rl} \eqref{el} below. Theorems \ref{mt}, \ref{mtv} \ref{mtp} and \ref{wht1}  highlights  that modulation spaces are  a  good alternative  as compared to Sobolev  and Besov  spaces to study equation \eqref{HTEH}. And we hope that our results will be useful  for the  further study (e.g., stability and scattering theory) of equation \eqref{HTEH}.

This paper is organized as follows.  In Section \ref{p}, we introduce  notations and preliminaries which will be used  in the sequel. In particular, in Subsections \ref{ns} and \ref{MW}, we introduce  $L^p_s-$Sobolev spaces and  modulation spaces(and their properties) respectively.  In Subsection \ref{whm}, we review boundedness of Schr\"odinger  propagator associated  with  harmonic potential on modulation spaces.  As an application of Strichartz's estimates and conservation of mass, we obtain  global well-posedness for \eqref{HTEH} in  $L^2(\mathbb R^d)$ in Section \ref{smt}. We shall see this will turn out to be  one of the main tools to obtain global well-posedness in modulation spaces.  In Subsections \ref{pmt}, \ref{pmtv}, \ref{phu} and \ref{lwhp},   we prove Theorems \ref{mt}, \ref{mtv}, \ref{mtp} and \ref{wht1} respectively. 

\section{Preliminaries}\label{p}  
\subsection{Notations} \label{ns} The notation $A \lesssim B $ means $A \leq cB$ for  some constant $c > 0 $. The symbol $A_{1}\hookrightarrow A_{2}$ denotes the continuous embedding  of the topological linear space $A_{1}$ into $A_{2}.$ 
If $\alpha= (\alpha_1,..., \alpha_d)\in \mathbb N^d$ is a multi-index, we set $|\alpha|= \sum_{j=1}^d \alpha_j, \alpha != \prod_{j=1}^d\alpha_j!.$ If $z=(z_1,...,z_d)\in \mathbb C^d,$ we put $z^{\alpha}=\prod_{j=1}^dz_{j}^{\alpha_j}$.The characteristic function of a set $E\subset \mathbb R^d$ is $\chi_{E}(x)=1$ if $x\in E$ and $\chi_E(x)=0$ if $x\notin E.$ Let $I\subset \mathbb R$ be an interval. Then the norm of the space-time Lebesgue space $L^{p}(I, L^q(\mathbb R^d))$ is defined by
$$\|u\|_{L^p(I, L^q(\mathbb R^d))}=\|u\|_{L^{p}_I L^q_x}= \left(\int_{I} \|u(t)\|^p_{L^q_x} dt \right)^{1/p}.$$
If there is no confusion, we simply write
$$\|u\|_{L^p(I, L^q)}=\|u\|_{L^{p}_I L^q_x}=\|u\|_{L^{p,q}_{t,x}}.$$
The Schwartz class is denoted  by $\mathcal{S}(\mathbb R^{d})$ (with its usual topology), and the space of tempered distributions is denoted by  $\mathcal{S'}(\mathbb R^{d}).$  For $x=(x_1,\cdots, x_d), y=(y_1,\cdots, y_d) \in \mathbb R^d, $ we put $x\cdot y = \sum_{i=1}^{d} x_i y_i.$ Let $\mathcal{F}:\mathcal{S}(\mathbb R^{d})\to \mathcal{S}(\mathbb R^{d})$ be the Fourier transform  defined by  
\begin{eqnarray*}
\mathcal{F}f(\xi)=\widehat{f}(\xi)=  (2\pi)^{-d/2} \int_{\mathbb R^{d}} f(x) e^{- i  x \cdot \xi}dx, \  \xi \in \mathbb R^d.
\end{eqnarray*}
Then $\mathcal{F}$ is a bijection  and the inverse Fourier transform  is given by
\begin{eqnarray*}
\mathcal{F}^{-1}f(x)=f^{\vee}(x)= (2\pi)^{-d/2}\int_{\mathbb R^{d}} f(\xi)\, e^{ i x\cdot \xi} d\xi,~~x\in \mathbb R^{d}.
\end{eqnarray*}
 The  Fourier transform can be uniquely extended to $\mathcal{F}:\mathcal{S}'(\mathbb R^d) \to \mathcal{S}'(\mathbb R^d).$  The \textbf{Fourier-Lebesgue spaces} $\mathcal{F}L^p(\mathbb R^d)$ is defined by 
$$\mathcal{F}L^p(\mathbb R^d)= \left\{f\in \mathcal{S}'(\mathbb R^d): \hat{f}\in L^{p}(\mathbb R^d) \right\}.$$
The  $\mathcal{F}L^{p} (\mathbb R^d)-$norm is denoted by
 $$\|f\|_{\mathcal{F}L^{p}}= \|\hat{f}\|_{L^{p}} \ (f\in \mathcal{F}L^{p}(\mathbb R^{d})).$$

The standard  Sobolev  spaces $W^{s,p}(\R^d) \ (1<p< \infty, s\geq 0)$ have a different character according to whether $s$ is integer or not. Namely, for $s$ integer, they consist of  $L^p-$functions with derivatives in  $L^p$ up to order $s$, hence coincide with the \textbf{$L^p_s-$Sobolev spaces }(also known as Bessel potential
spaces), defined for  $s\in \R$ by
$$L^p_s(\mathbb R^d)=\left\{f\in \mathcal{S}'(\mathbb R^d): \mathcal{F}^{-1} [\langle \cdot \rangle^s \mathcal{F}(f)] \in L^p(\mathbb R^d) \right\},$$
where  $\langle \xi \rangle^{s} = (1+ |\xi|^2)^{s/2} \ (\xi \in \mathbb R^d).$ Note that $L^p_{s_1}(\R^d) \hookrightarrow L^p_{s_2}(\R^d)$ if $s_2\leq s_1.$

\subsection{Modulation  Spaces}\label{MW}
In 1983, Feichtinger \cite{HG4} introduced a  class of Banach spaces,  the so called modulation spaces,  which allow a measurement of space variable and Fourier transform variable of a function or distribution on $\mathbb R^d$ simultaneously using the short-time Fourier transform(STFT). The  STFT  of a function $f$ with respect to a window function $g \in {\mathcal S}(\R^d)$ is defined by
$$  V_{g}f(x,y) = (2\pi)^{-d/2} \int_{\R^d} f(t) \overline{g(t-x)} \, e^{- i y\cdot t} \, dt,  ~  (x, y) \in \mathbb R^{2d} $$
whenever the integral exists. 
For $x, y \in \R^d$ the translation operator $T_x$ and the modulation operator $M_y$ are
defined by $T_{x}f(t)= f(t-x)$ and $M_{y}f(t)= e^{i y\cdot t} f(t).$ In terms of these
operators the STFT may be expressed as
\begin{eqnarray}
\label{ipform} V_{g}f(x,y)=\langle f, M_{y}T_{x}g\rangle\nonumber
\end{eqnarray}
 where $\langle f, g\rangle$ denotes the inner product for $L^2$ functions,
or the action of the tempered distribution $f$ on the Schwartz class function $g$.  Thus $V: (f,g) \to V_g(f)$ extends to a bilinear form on $\mathcal{S}'(\R^d) \times \mathcal{S}(\R^d)$ and $V_g(f)$ defines a uniformly continuous function on $\R^{d} \times \R^d$ whenever $f \in \mathcal{S}'(\R^d) $ and $g \in  \mathcal{S}(\R^d)$.
\begin{Definition}[modulation spaces]\label{ms} Let $1 \leq p,q \leq \infty,$ and $0\neq g \in{\mathcal S}(\R^d)$. The  modulation space   $M^{p,q}(\R^d)$
is defined to be the space of all tempered distributions $f$ for which the following  norm is finite:
$$ \|f\|_{M^{p,q}}=  \left(\int_{\R^d}\left(\int_{\R^d} |V_{g}f(x,y)|^{p} dx\right)^{q/p} \, dy\right)^{1/q},$$ for $ 1 \leq p,q <\infty$. If $p$ or $q$ is infinite, $\|f\|_{M^{p,q}}$ is defined by replacing the corresponding integral by the essential supremum. 
\end{Definition}
\begin{Remark}\label{CR}
\begin{enumerate} We note following
\item \label{equidm}
The definition of the modulation space given above, is independent of the choice of 
the particular window function.  See \cite[Proposition 11.3.2(c), p.233]{gro}, \cite{baob}.

\item \label{msw}
For a pair of functions $f$ and the Gaussian $ \Phi_0(\xi) = \pi^{-d/2} e^{-\frac{1}{2}|\xi|^2} $, 
the Fourier-Wigner transform of $ f $  and  $\Phi_0$ is defined by 
$$ F(x,y):=\langle \pi(x+iy)f,\Phi_{0}\rangle  = \int_{\mathbb R^d} e^{i(x \cdot \xi+\frac{1}{2}x\cdot y)} f(\xi+y)\Phi_0(\xi) d\xi. $$
We say $f\in M^{q,p}(\mathbb R^d)$ if  $ \left\| \|F(x,y)\|_{L_y^{q}}\right\|_{L^p_x}<\infty.$
\end{enumerate}
\end{Remark}
Next we justify Remark  \ref{CR}\eqref{msw}: we shall see how the Fourier-Wigner transform  and the STFT are related. We consider the Heisenberg group $\mathbb H^{d}=\mathbb C^{d} \times \mathbb R$ with the group law
$$(z,t)(w,s)=\left(z+w, t+s+\frac{1}{2}  \text{Im} (z\cdot \bar{w})\right).$$
Let $\pi$ be the Schr\"odinger representation of the Heisenberg group  which is realized on $L^{2}(\mathbb R^d)$ and explicitly given by 
$$\pi (x,y,t) \phi (\xi)=e^{it} e^{i(x\cdot \xi + \frac{1}{2}x\cdot y)} \phi (\xi +y)$$
where $x,y \in \mathbb R^d, t \in \mathbb R, \phi \in L^{2}(\mathbb R^d).$ The Fourier-Wigner transform of two functions $f, g\in L^{2}(\mathbb R^d)$ is defined by 
$$W_{g}f(x,y)= (2\pi)^{-d/2} \langle \pi (x,y, 0)f, g \rangle.$$
We recall  polarised Heisenberg group
$ \mathbb H^d_{pol} $ which is just $ \mathbb R^d \times \mathbb R^d \times \mathbb R $ with the group law
$$ (x,y,t)(x',y',t') = (x+x',y+y', t+t'+x'\cdot y)$$
and the representation    $\rho (x,y, e^{it})$ acting on  $L^{2}(\mathbb R^d)$ is given by 
$$\rho(x,y, e^{it})\phi (\xi)= e^{it} e^{ix\cdot \xi} \phi (\xi +y), \ \  \phi \in L^{2}(\mathbb R^d).$$
We now write the Fourier-Wigner transform  in terms of  the STFT:  Specifically,
we put $\rho(x,y)\phi(\xi)= e^{ix\cdot \xi} \phi (\xi +y), z= (x,y),$ and  we  have 
\begin{eqnarray}\label{ui}
\langle \pi(z)f, g\rangle=\langle \pi(x,y)f, g \rangle = e^{\frac{i}{2}x\cdot y} \langle \rho(x,y)f, g\rangle= e^{-\frac{i}{2} x\cdot y} V_{g}f(y,-x).
\end{eqnarray}
This useful identity \eqref{ui} reveals that  the definition of modulation spaces we have introduced  in  Remark \ref{CR}\eqref{msw} and  Definition \ref{ms}  is essentially the same.

The following basic properties of modulation spaces are well-known   and for the proof we refer  to \cite{gro, HG4, baob}.

\begin{Lemma}\label{rl} Let $p,q, p_{i}, q_{i}\in [1, \infty]$  $(i=1,2), s_1, s_2 \in \mathbb R.$ Then
\begin{enumerate}
\item \label{ir} $M_{s_1}^{p_{1}, q_{1}}(\mathbb R^{d}) \hookrightarrow M_{s_2}^{p_{2}, q_{2}}(\mathbb R^{d})$ whenever $p_{1}\leq p_{2},q_{1}\leq q_{2},$ and $ s_2 \leq s_1.$

\item \label{el} $M^{p,q_{1}}(\mathbb R^{d}) \hookrightarrow L^{p}(\mathbb R^{d}) \hookrightarrow M^{p,q_{2}}(\mathbb R^{d})$ holds for $q_{1}\leq \text{min} \{p, p'\}$ and $q_{2}\geq \text{max} \{p, p'\}$ with $\frac{1}{p}+\frac{1}{p'}=1.$
\item \label{rcs} $M^{\min\{p', 2\}, p}(\mathbb R^d) \hookrightarrow \mathcal{F} L^{p}(\mathbb R^d)\hookrightarrow M^{\max \{p',2\},p}(\mathbb R^d),  \frac{1}{p}+\frac{1}{p'}=1.$
\item \label{d} $\mathcal{S}(\mathbb R^{d})$ is dense in  $M^{p,q}(\mathbb R^{d})$ if $p$ and $q<\infty.$
\item \label{fi} The Fourier transform $\mathcal{F}:M^{p,p}(\mathbb R^{d})\to M^{p,p}(\mathbb R^{d})$ is an isomorphism.
\item The space  $M^{p,q}(\mathbb R^{d})$ is a  Banach space.
\item \label{ic}The space $M^{p,q}(\mathbb R^{d})$ is invariant under complex conjugation.
\end{enumerate}
\end{Lemma}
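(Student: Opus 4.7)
The plan is to prove each of the seven items by systematically exploiting the short-time Fourier transform machinery and, crucially, the window-independence of the modulation space norm (Remark 2.2(1)). That independence allows us to freely replace any window by the Gaussian $\Phi_0$ or by any convenient Schwartz function when making estimates, which is what makes the proof manageable. The other basic tool is the STFT inversion formula $f=\|g\|_2^{-2}\iint V_g f(x,y)\,M_yT_x g\,dx\,dy$ together with the reproducing identity
\[
V_{g_1}f(x,y)=\frac{1}{\|g_0\|_2^{2}}\bigl(V_{g_0}f\bigr)\ast_{\flat}\bigl(V_{g_0}g_1\bigr)(x,y),
\]
where $\ast_\flat$ is a twisted convolution on $\mathbb R^{2d}$. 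Since $V_{g_0}g_1\in\mathcal{S}(\mathbb R^{2d})$ whenever $g_0,g_1\in\mathcal S(\mathbb R^d)$, the ``kernel'' lies in every mixed $L^{r,s}$ space.

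For (1), I would reduce to the case $s_1=s_2$ (weight comparison is immediate) and then apply Young's inequality on $\mathbb R^{2d}$ in the reproducing formula: since the kernel $V_{g_0}g_1$ is in $L^1(\mathbb R^{2d})$, one obtains boundedness from $L^{p_1,q_1}$ into $L^{p_2,q_2}$ whenever $p_1\le p_2$ and $q_1\le q_2$, giving the desired embedding. For (2), apply the STFT inversion formula followed by Minkowski's integral inequality: one direction gives $\|f\|_{L^p}\lesssim\|V_gf\|_{L^{p,q_1}}$ when $q_1\le\min\{p,p'\}$, and a dual pairing argument (together with the fact $\langle f,g\rangle=\|\gamma\|_2^{-2}\langle V_\gamma f,V_\gamma g\rangle$) yields $\|f\|_{M^{p,q_2}}\lesssim\|f\|_{L^p}$ when $q_2\ge\max\{p,p'\}$. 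Item (6) follows because the STFT with a fixed Gaussian window embeds $M^{p,q}$ isometrically onto a closed subspace of $L^{p,q}(\mathbb R^{2d})$, so completeness is inherited.

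For items (3), (5), (7), the engine is the symmetry relation
\[
V_g f(x,y)=e^{-ix\cdot y}\,V_{\hat g}\hat f(y,-x),
\]
which swaps the roles of position and frequency. Applied with $g=\hat g=\Phi_0$, it immediately gives (5) because the $M^{p,p}$ norm is computed by an $L^p(\mathbb R^{2d})$ norm that is symmetric in $(x,y)$. For (3), combining this relation with the Hausdorff--Young inequality on the $x$-integral (the ``frequency side'' after the swap) produces the two-sided embedding between $\mathcal{F}L^p$ and $M^{p',p}$ or $M^{2,p}$, and then item (1) fills in the $\min$/$\max$. Item (7) is obtained from $V_g\bar f(x,y)=\overline{V_{\bar g}f(x,-y)}$, which shows that complex conjugation merely reflects the frequency coordinate. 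Finally (4) follows from density of $\mathcal S(\mathbb R^{2d})$ in $L^{p,q}(\mathbb R^{2d})$ with $p,q<\infty$, transported through the STFT and its bounded adjoint back to $M^{p,q}(\mathbb R^d)$.

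The part I expect to be least routine is (3), since the proof of the sharp Hausdorff--Young-type chain requires pairing the symmetry identity with Minkowski's inequality in the right order and then interpolating; the ordering of the $\min$ and $\max$ between $p'$ and $2$ is what makes the statement non-obvious. All remaining items are fairly mechanical once the window-independence and inversion formula are in hand, which is why the standard references \cite{gro,HG4,baob} suffice for a full write-up.
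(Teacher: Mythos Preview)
Your proposal is correct, and in fact substantially more detailed than what the paper does. The paper's own ``proof'' of this lemma consists almost entirely of citations: items (1)--(4) are handled by pointing to \cite[Theorem 12.2.2]{gro}, \cite[Proposition 1.7]{JT}, \cite[Corollary 1.1]{cks}, and \cite[Proposition 11.3.4]{gro} respectively, item (6) is not addressed at all, and only (5) and (7) receive any argument --- and those match your approach exactly (the fundamental identity $V_gf(x,w)=e^{-2\pi i x\cdot w}V_{\hat g}\hat f(w,-x)$ for (5), and the observation $\|f\|_{M^{p,q}}=\|\bar f\|_{M^{p,q}}$ for (7)).

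The difference, then, is one of ambition rather than mathematics: you sketch genuine proofs via the reproducing formula, Young's inequality, STFT inversion, and the symmetry identity, whereas the paper treats the lemma as a collection of known facts from the literature. Your route is self-contained and would make the paper independent of several external references; the paper's route is shorter and appropriate given that these are standard structural results. Your caution about item (3) being the least routine is well placed --- indeed the paper does not attempt it and simply cites \cite{cks}.
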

\begin{proof}
For the proof of statements  \eqref{ir}, \eqref{el}, \eqref{rcs} and \eqref{d},  see  \cite[Theorem 12.2.2]{gro}, \cite[Proposition 1.7]{JT},  \cite[Corollary 1.1]{cks} and \cite[Proposition 11.3.4]{gro} respectively.  The proof  for the statement (5)  can be derived from  the fundamental identity of time-frequency analysis: 
$$ V_gf(x, w) = e^{-2 \pi i x \cdot w } \, V_{\widehat{g}} \widehat{f}(w, -x),$$
which is easy to obtain. The proof of the statement (7) is trivial, indeed, we have $\|f\|_{M^{p,q}}=\|\bar{f}\|_{M^{p,q}}.$
\end{proof}
\begin{Theorem}[Algebra property] \label{pl} Let $p,q, p_{i}, q_{i}\in [1, \infty]$  $(i=0,1,2).$
\begin{enumerate}
\item \label{mp}
If   $\frac{1}{p_1}+ \frac{1}{p_2}= \frac{1}{p_0}$ and $\frac{1}{q_1}+\frac{1}{q_2}=1+\frac{1}{q_0}, $ then
\begin{eqnarray*}\label{prm}
M^{p_1, q_1}(\mathbb R^{d}) \cdot M^{p_{2}, q_{2}}(\mathbb R^{d}) \hookrightarrow M^{p_0, q_0}(\mathbb R^{d})
\end{eqnarray*}
with norm inequality $\|f g\|_{M^{p_0, q_0}}\lesssim \|f\|_{M^{p_1, q_1}} \|g\|_{M^{p_2,q_2}}.$
In particular, the  space $M^{p,q}(\mathbb R^{d})$ is a poinwise $\mathcal{F}L^{1}(\mathbb R^{d})$-module, that is, it satisfies
\begin{eqnarray*}
\|fg\|_{M^{p,q}} \lesssim \|f\|_{\mathcal{F}L^{1}} \|g\|_{M^{p,q}}.
\end{eqnarray*} 
\item  If $\frac{1}{p_1}+ \frac{1}{p_2}=1+\frac{1}{p_0}$ and $\frac{1}{q_1}+\frac{1}{q_2}=1,$ then
$$M^{p_{1},q_{1}}(\mathbb R^{d}) \ast M^{p_2, q_2}(\mathbb R^{d}) \hookrightarrow M^{p_0, q_0}(\mathbb R^{d}) $$  with norm inequality $\|f\ast h\|_{M^{p_0, q_0}} \lesssim \|f\|_{M^{p_1,q_1}} \|h\|_{M^{p_2, q_2}}.$ In particular, $M^{p,q}(\mathbb R^{d})$ is a left  Banach $L^{1}(\mathbb R^{d})-$ module with respect to  convolution.
\end{enumerate}
\end{Theorem}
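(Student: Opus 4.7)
The proof of both parts rests on two identities describing how the STFT interacts with pointwise multiplication and with convolution, combined with Young's convolution inequality and Hölder's inequality. I would fix a nonzero window $g\in\mathcal{S}(\mathbb R^d)$; by the independence of the modulation norm on the choice of window (Remark \ref{CR}\eqref{equidm}), one may freely replace $g$ by any convenient factorization, $g=g_1\cdot g_2$ for part (1) or $g=g_1\ast g_2$ for part (2), with $g_1,g_2\in\mathcal{S}(\mathbb R^d)$.

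For part (1), the starting identity is
\[
V_g(fh)(x,y) \;=\; \bigl(V_{g_1}f(x,\cdot)\ast_{y} V_{g_2}h(x,\cdot)\bigr)(y),
\]
which follows from writing the STFT as the Fourier transform of the localized function $t\mapsto f(t)h(t)\overline{g(t-x)}$ and using that $\mathcal F$ sends products to convolutions. Young's convolution inequality applied at each fixed $x\in\mathbb R^d$ under $1/q_1+1/q_2=1+1/q_0$ gives
\[
\|V_g(fh)(x,\cdot)\|_{L^{q_0}_y} \;\lesssim\; \|V_{g_1}f(x,\cdot)\|_{L^{q_1}_y}\,\|V_{g_2}h(x,\cdot)\|_{L^{q_2}_y}.
\]
Taking the $L^{p_0}_x$ norm and applying H\"older in $x$ with $1/p_1+1/p_2=1/p_0$ yields the claimed estimate. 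The pointwise $\mathcal F L^1$-module property is the specialization $(p_1,q_1)=(\infty,1)$, combined with the Sj\"ostrand embedding $\mathcal F L^1\hookrightarrow M^{\infty,1}$.

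For part (2) the analogous identity is
\[
V_g(f\ast h)(x,y) \;=\; \bigl(V_{g_1}f(\cdot,y)\ast_{x} V_{g_2}h(\cdot,y)\bigr)(x),
\]
valid when $g=g_1\ast g_2$; this is a direct Fubini computation in which the inner $s$-integral $\int\overline{g_1(t-s)\,g_2(u-x+s)}\,ds$ collapses to $\overline{g(t+u-x)}$. The roles of Young and H\"older are now interchanged: Young in the $x$-variable supplies $1/p_1+1/p_2=1+1/p_0$, and H\"older in the $y$-variable supplies the stated relation on the $q_i$. The Banach $L^1$-module property then follows by taking $p_1=1$, $q_1=\infty$, and invoking $L^1\hookrightarrow M^{1,\infty}$ from Lemma \ref{rl}\eqref{el}.

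The only delicate point in either part is that the modulation norm has the iteration order $L^{q}_y(L^{p}_x)$, whereas the above pointwise estimates produce $L^{p}_x(L^{q}_y)$. In the regime $p\le q$ this is handled directly by Minkowski's integral inequality; in the opposite regime one either performs the Young/H\"older steps in the reverse order from the outset (applying H\"older first in $x$, then Young in $y$, so that the outer norm is already in $y$), or one appeals to the equivalent Gabor-lattice description of the modulation norm, in which the two orders of summation coincide up to constants depending only on the window. This interchange is the principal obstacle beyond otherwise routine uses of Young's and H\"older's inequalities.
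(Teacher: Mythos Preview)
Your approach is correct and is in fact the standard route in the literature (the references \cite{ambenyi} and \cite{JT} cited by the paper proceed exactly this way); the paper itself gives no argument beyond those citations, so there is nothing further to compare against.

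One small correction to your discussion of the ``delicate point'': the Minkowski interchange $\|F\|_{L^{q_0}_y L^{p_0}_x}\le \|F\|_{L^{p_0}_x L^{q_0}_y}$ holds when $p_0\ge q_0$, not when $p\le q$ as you wrote. But this is moot, because your ``reverse order'' alternative actually works uniformly and is the clean way to organize the proof. Concretely, for part (1): from the identity $V_g(fh)(x,y)=\int V_{g_1}f(x,\eta)V_{g_2}h(x,y-\eta)\,d\eta$, take the $L^{p_0}_x$ norm at fixed $y$ first, push it inside the $\eta$-integral by Minkowski, apply H\"older in $x$, and recognize the result as a convolution in $y$ of the functions $\eta\mapsto\|V_{g_1}f(\cdot,\eta)\|_{L^{p_1}_x}$ and $\eta\mapsto\|V_{g_2}h(\cdot,\eta)\|_{L^{p_2}_x}$; then Young in $y$ finishes. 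No norm interchange is needed. The same reorganization works verbatim for part (2) with the roles of $x$ and $y$ swapped. I would present the argument in this order from the start and drop the final paragraph entirely.
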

\begin{proof}
For the proof of statements \eqref{mp}, we refer to  \cite{ambenyi} and \cite{JT} respectively.
\end{proof}

We remark that there is also an equivalent  definition of modulation spaces using frequency-uniform decomposition techniques (which is quite similar in the spirit of Besov spaces  $B^{p,q}(\R^d)$), independently  studied by Wang et al. in \cite{bao, wang2}, which has turned out to be very fruitful in PDE.  Since  Besov and Sobolev  spaces are widely used  PDE, we recall  some inclusion relations between Besov, Sobolev and  modulation spaces. Besides, this gives us a  flavor how far low regularity initial Cauchy data  in Theorems \ref{mt}, \ref{mtv},  \ref{mtp}, and \ref{wht1} one can take.
\begin{Proposition}[examples]\label{exa} The following are true:
\begin{enumerate}

\item \label{msi} $L^p_s(\mathbb R^d) \hookrightarrow  M^{p,p}(\mathbb R^d) $ for $s\geq d(\frac{2}{p}-1)$ and $1<p\leq 2.$
\item  \label{kr}  $L^p_s(\R^d) \hookrightarrow M^{r,1}(\R^d)$ for $s>d, 1\leq p,q,r \leq \infty,$ and  $\frac{1}{p}+\frac{1}{q}=1+\frac{1}{r}.$
\item \label{tne} Define \[f(x)= \sum_{k\neq 0} |k|^{-\frac{d}{q}-\epsilon} e^{ik\cdot x} e^{-|x|^2} \ \text{in}  \ \mathcal{S}'(\mathbb R^d). \]  Then $f \in M^{p,q}(\R^d)$ for  $1\leq p, q \leq \infty, (p,q) \neq (1, \infty), (\infty, 1).$ 
\end{enumerate}
\end{Proposition}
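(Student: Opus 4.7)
The plan is to prove the three items independently. For (1) and (2) the common workhorse is the equivalent frequency-uniform-decomposition characterization of the modulation norm, $\|f\|_{M^{p,q}} \asymp \bigl(\sum_{k\in \Z^d} \|\Box_k f\|_{L^p}^q\bigr)^{1/q}$, where $\{\Box_k\}$ is a smooth unit-cube Fourier partition of unity (the Wang \emph{et al.} description of modulation spaces cited just before the proposition). For (3) I would instead compute the short-time Fourier transform directly, using a Gaussian window.

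For (1), the starting observation is that on $\operatorname{supp}\widehat{\Box_k f}$ one has $\langle\xi\rangle^{-s}\sim\langle k\rangle^{-s}$, so $\|\Box_k f\|_{L^p}\asymp\langle k\rangle^{-s}\,\|\Box_k ((1-\Delta)^{s/2} f)\|_{L^p}$. Summing in $\ell^p$ and invoking a Littlewood--Paley square-function bound for $1<p\le 2$ together with a Bernstein comparison on the unit-sized frequency bands (which is where the loss $d(\tfrac{2}{p}-1)$ enters) controls the outcome by $\|f\|_{L^p_s}$. For (2), I would write $f=G_s\ast h$ with $h=(1-\Delta)^{s/2}f\in L^p$ and $G_s$ the Bessel kernel. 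A short computation, splitting $\widehat{G_s}=\langle\xi\rangle^{-s}$ on the uniform cubes, yields $\|\Box_k G_s\|_{L^q}\lesssim \langle k\rangle^{-s}$ uniformly in $q\in[1,\infty]$. Young's convolution inequality with $\tfrac{1}{p}+\tfrac{1}{q}=1+\tfrac{1}{r}$ then produces $\|\Box_k f\|_{L^r}\lesssim \langle k\rangle^{-s}\|h\|_{L^p}$, and the $\ell^1$-sum over $k$ converges precisely when $s>d$, giving $f\in M^{r,1}$.

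For (3), with Gaussian window $g(x)=e^{-|x|^2}$ the intertwining identity $V_g(M_k g)(x,y)=e^{ix\cdot k}\,V_g g(x,y-k)$ reduces the STFT to
\[
V_g f(x,y)=\sum_{k\neq 0}|k|^{-d/q-\epsilon}\,e^{ix\cdot k}\,V_g g(x,y-k),
\]
where $V_g g$ is an explicit Gaussian on $\R^{2d}$. For fixed $y$ the Gaussian bumps centered at the distinct lattice points $k$ are essentially disjoint in the frequency variable, so one expects $\|V_g f(\cdot,y)\|_{L^p_x}\asymp |k(y)|^{-d/q-\epsilon}\,e^{-c|y-k(y)|^2}$, where $k(y)$ denotes the lattice point closest to $y$; taking the $L^q_y$-norm yields
\[
\|f\|_{M^{p,q}}^q\asymp \sum_{k\neq 0}|k|^{-d-q\epsilon},
\]
which is finite for every $\epsilon>0$ when $q<\infty$, and an analogous supremum computation handles $q=\infty$. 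The main obstacle is the rigorous treatment of the oscillatory sum in the STFT: the phases $e^{ix\cdot k}$ rule out any term-by-term cancellation argument, so one must establish both the upper bound (via the triangle inequality exploiting the Gaussian separation) and a matching lower bound in the mixed-norm, and a careful tracking of the interplay between the $L^p_x$ and $L^q_y$ norms is what pins down the admissible exponents and accounts for the two excluded endpoint pairs $(1,\infty)$ and $(\infty,1)$.
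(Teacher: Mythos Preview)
The paper itself does not prove this proposition: its ``proof'' consists entirely of citations to Kobayashi--Sugimoto for (1), Okoudjou for (2), and Sugimoto--Tomita for (3). So your proposal necessarily goes beyond what the paper offers, and the comparison is with those references.

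Your argument for (2) is essentially Okoudjou's: write $f=G_s\ast h$ with $h\in L^p$, observe that $\Box_k f=(\Box_k G_s)\ast h$, bound $\|\Box_k G_s\|_{L^q}\lesssim\langle k\rangle^{-s}$ uniformly in $q$ (since the multiplier $\sigma_k\langle\cdot\rangle^{-s}$ is a fixed Schwartz profile translated to $k$ and scaled by $\langle k\rangle^{-s}$), and finish with Young and the $\ell^1$-summability for $s>d$. This is correct.

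For (3) your STFT computation is right, and the upper bound via the triangle inequality already gives $f\in M^{p,q}$: after the identity you reach $\|V_gf(\cdot,y)\|_{L^p_x}\lesssim\sum_{k\neq0}|k|^{-d/q-\epsilon}e^{-c|y-k|^2}$, and the Gaussian separation in the $y$-variable yields the $\ell^q$-sum $\sum_{k\neq0}|k|^{-d-q\epsilon}<\infty$. Note that the statement only asserts membership, so the lower bound you worry about is not needed here; the matching lower bound is what Sugimoto--Tomita use to prove \emph{sharpness} of other inclusions, but that is not the content of the proposition as stated.

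Part (1) is where your sketch has a genuine gap. The step ``summing in $\ell^p$ and invoking a Littlewood--Paley square-function bound for $1<p\le2$'' is not a recognisable inequality in this context: the classical Littlewood--Paley square function is dyadic, while here you need a \emph{unit-scale} decomposition estimate of the type $\sum_k\langle k\rangle^{-sp}\|\Box_k g\|_{L^p}^p\lesssim\|g\|_{L^p}^p$, and it is not clear what square-function or Bernstein step produces exactly the threshold $s=d(\tfrac{2}{p}-1)$. A cleaner route is to use the known embedding $L^p\hookrightarrow M^{p,p'}$ for $1\le p\le2$ (interpolate $L^2=M^{2,2}$ and $L^1\hookrightarrow M^{1,\infty}$), transfer the weight to get $L^p_s\hookrightarrow M^{p,p'}_s$, and then apply H\"older in the $\ell$-index to pass from $M^{p,p'}_s$ to $M^{p,p}$; the H\"older step converges precisely when $s>d(\tfrac{2}{p}-1)$. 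This gives the open range; the endpoint $s=d(\tfrac{2}{p}-1)$ requires the sharper analysis carried out in Kobayashi--Sugimoto and is not recoverable from the argument you outlined.
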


\begin{proof}
For the proof of statements \eqref{msi}, \eqref{kr},  see \cite[Theorem 3.1]{ks}, \cite[Theorem 3.4]{krp}.  For the proof of statement \ref{tne}, see \cite[Lemma 3.8]{msnt}.
\end{proof}
For further relations between modulation, Sobolev, and Besov spaces, we refer to \cite{msnt, krp, JT, ks, cks, bao, wang2, baob} and the references therein.
\subsection{Hermite and special Hermite functions}\label{whm} The spectral decomposition of $H=-\Delta + |x|^2$ is given by  the Hermite expansion. Let $\Phi_{\alpha}(x),  \  \alpha \in \mathbb N^d$ be the normalized Hermite functions which are products of one dimensional Hermite functions. More precisely, 
$ \Phi_\alpha(x) = \Pi_{j=1}^d  h_{\alpha_j}(x_j) $ 
where 
$$ h_k(x) = (\sqrt{\pi}2^k k!)^{-1/2} (-1)^k e^{\frac{1}{2}x^2}  \frac{d^k}{dx^k} e^{-x^2}.$$ The Hermite functions $ \Phi_\alpha $ are eigenfunctions of $H$ with eigenvalues  $(2|\alpha| + d)$  where $|\alpha |= \alpha_{1}+ ...+ \alpha_d.$ Moreover, they form an orthonormal basis for $ L^2(\R^d).$ The spectral decomposition of $ H $ is then written as
$$ H = \sum_{k=0}^\infty (2k+d) P_k,~~~~ P_kf(x) = \sum_{|\alpha|=k} \langle f,\Phi_\alpha\rangle \Phi_\alpha$$ 
where $\langle\cdot, \cdot \rangle $ is the inner product in $L^2(\mathbb{R}^d)$. Given a function $m$ defined and bounded  on the set of all natural numbers we can use the spectral theorem to define $m(H).$ The action of $m(H)$ on a function $f$ is given by
\begin{eqnarray}\label{dhm}
m(H)f= \sum_{\alpha \in \mathbb N^d} m(2|\alpha| +d) \langle f, \Phi_{\alpha} \rangle \Phi_{\alpha} = \sum_{k=0}^\infty m(2k+d)P_kf.
\end{eqnarray}
This operator  $m(H)$ is bounded on $L^{2}(\mathbb R^d).$ This follows immediately from the Plancherel theorem for the Hermite expansions as  $m$ is bounded.  On the other hand, the mere boundedness of $m$ is not sufficient  to imply  the $L^{p}$ boundedness of $m(H)$ for $p\neq 2$ (see \cite{ST}). 

In the sequel, we make use of some properties of special Hermite functions $ \Phi_{\alpha, \beta} $ which are defined as follows. We recall \eqref{ui}  and define
\begin{eqnarray}\label{SHF}
\Phi_{\alpha,\beta}(z) =  (2\pi)^{-d/2} \langle \pi(z)\Phi_\alpha,\Phi_\beta\rangle.
\end{eqnarray}
Then it is well known that these so called special Hermite functions form an orthonormal basis for $ L^2(\mathbb C^d).$ In particular, we have (\cite[Theorem 1.3.5]{ST})
\begin{eqnarray}\label{EPSHF}
 \Phi_{\alpha,0}(z) =  (2\pi)^{-d/2} (\alpha !)^{-1/2} \left(\frac{i}{\sqrt{2}} \right)^{|\alpha|}\bar{z}^{\alpha} e^{-\frac{1}{4} |z|^2}.
\end{eqnarray}

We define \textbf{Schr\"odinger propagator associated to harmonic oscillator $m(H)=e^{itH}$}, denoted by $U(t),$ by equation \eqref{dhm}  with $m(n)=e^{itn} \ (n\in \mathbb N, t\in  \mathbb R).$ Next proposition says that $U(t)$ is uniformly  bounded on $M^{p,p}(\R^d).$ Specifically, we have 

\begin{Theorem}[\cite{drt}]\label{mso}
The Schr\"odinger  propagator  $m(H)=U(t)=e^{itH}$ is bounded on $M^{p,p}(\mathbb R^d) \ (t\in \mathbb R)$ for all $1\leq p < \infty.$ In fact, we have $\|e^{itH}f\|_{M^{p,p}}=\|f\|_{M^{p,p}}.$
\end{Theorem}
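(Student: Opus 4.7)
The plan is to exploit the Fourier-Wigner characterization of $M^{p,p}$ recalled in Remark \ref{CR}(2): for every $f\in\mathcal S'(\mathbb R^d)$ one has, up to an absolute constant depending on the window,
\[
\|f\|_{M^{p,p}}^{p}\;\sim\;\int_{\mathbb R^{2d}}|F_f(z)|^{p}\,dz,\qquad F_f(z):=\langle \pi(z)f,\Phi_0\rangle,
\]
because when the two modulation exponents coincide the iterated norm $\|\|F_f\|_{L^p_x}\|_{L^p_y}$ collapses to an $L^p$-norm on the full phase space $\mathbb R^{2d}\cong\mathbb C^d$. Hence the task reduces to showing that $F_{U(t)f}$ differs from $F_f$ only by a measure-preserving change of variables (and a unimodular scalar).

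To prove this I would expand $f$ in the Hermite basis $\{\Phi_\alpha\}$ and invoke the defining relation \eqref{SHF} to write
\[
F_f(z)\;=\;(2\pi)^{d/2}\sum_{\alpha\in\mathbb N^d}\langle f,\Phi_\alpha\rangle\,\Phi_{\alpha,0}(z).
\]
Applying $U(t)$ multiplies the coefficient of $\Phi_\alpha$ by $e^{it(2|\alpha|+d)}$ in view of \eqref{dhm}, so
\[
F_{U(t)f}(z)\;=\;(2\pi)^{d/2}e^{itd}\sum_{\alpha}e^{2it|\alpha|}\langle f,\Phi_\alpha\rangle\,\Phi_{\alpha,0}(z).
\]
The heart of the argument is then the homogeneity built into the explicit formula \eqref{EPSHF}: since $\Phi_{\alpha,0}(z)$ is a constant multiple of $\bar z^{\alpha}e^{-|z|^2/4}$, and the complex rotation $z\mapsto e^{-2it}z$ preserves $|z|$ while sending $\bar z^{\alpha}$ to $e^{2it|\alpha|}\bar z^{\alpha}$, one obtains the pointwise identity
\[
\Phi_{\alpha,0}(e^{-2it}z)\;=\;e^{2it|\alpha|}\,\Phi_{\alpha,0}(z).
\]
Re-summing the Hermite series then yields the intertwining relation
\[
F_{U(t)f}(z)\;=\;e^{itd}\,F_f(e^{-2it}z).
\]

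From this the conclusion is immediate: the map $z\mapsto e^{-2it}z$ is unitary on $\mathbb C^d$, hence orthogonal on $\mathbb R^{2d}$, so it preserves Lebesgue measure. Combined with the modulus-one prefactor this gives $|F_{U(t)f}(z)|=|F_f(e^{-2it}z)|$, and a change of variables produces
\[
\int_{\mathbb R^{2d}}|F_{U(t)f}(z)|^{p}\,dz\;=\;\int_{\mathbb R^{2d}}|F_f(z)|^{p}\,dz,
\]
i.e.\ $\|U(t)f\|_{M^{p,p}}=\|f\|_{M^{p,p}}$ for every $1\le p<\infty$; the case $p=\infty$ follows by replacing the integral with an essential supremum.

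The main obstacle is establishing the intertwining identity $F_{U(t)f}(z)=e^{itd}F_f(e^{-2it}z)$, which hinges on the precise form of $\Phi_{\alpha,0}$ given in \eqref{EPSHF}; once that is in hand everything else is bookkeeping. It is also worth emphasising why the argument is confined to the diagonal $q=p$: the phase-space rotation mixes the $x$ and $y$ coordinates of $z=(x,y)$, so only the symmetric $L^p(\mathbb R^{2d})$ norm of $F_f$ is preserved. The mixed $L^p_xL^q_y$ norm on which a general $M^{p,q}$ with $p\neq q$ would rely is \emph{not} rotation invariant, which is precisely the structural reason the theorem is stated only for $M^{p,p}$.
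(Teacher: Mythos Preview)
Your proof is correct and is essentially the paper's own argument: both expand in the Hermite basis, invoke the explicit form \eqref{EPSHF} of $\Phi_{\alpha,0}$, and reduce the identity $\|e^{itH}f\|_{M^{p,p}}=\|f\|_{M^{p,p}}$ to a measure-preserving rotation of the phase-space variable. The only cosmetic difference is that where the paper passes to polar coordinates $z_j=r_je^{i\theta_j}$ and performs the shift $\theta_j\mapsto\theta_j-2t$, you package the same change of variables as the complex rotation $z\mapsto e^{-2it}z$ and appeal directly to unitary invariance of Lebesgue measure on $\mathbb R^{2d}$.
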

\begin{proof}Let $f\in \mathcal{S}(\mathbb R^d).$
Then we have the  Hermite expansion of  $f$ as follows:
\begin{eqnarray}\label{he}
f= \sum_{\alpha \in \mathbb N^d} \langle f, \Phi_{\alpha}\rangle \Phi_{\alpha}.
\end{eqnarray} 
Now using \eqref{he} and \eqref{SHF}, we obtain
\begin{eqnarray}\label{pihe}
\langle \pi(z)f, \Phi_0 \rangle & =  & \sum_{\alpha \in \mathbb N^d}\langle f, \Phi_{\alpha} \rangle \langle \pi(z)\Phi_{\alpha}, \Phi_{0} \rangle \nonumber\\
& = & \sum_{\alpha \in \mathbb N^d} \langle f, \Phi_{\alpha} \rangle \Phi_{\alpha, 0}(z).
\end{eqnarray}
Since $\{\Phi_{\alpha} \}$ forms an orthonormal basis for $L^{2}(\mathbb R^d),$ \eqref{pihe} gives
\begin{eqnarray}\label{ff}
\langle \pi(z)m(H)f, \Phi_0  \rangle  & = &  \sum_{\alpha \in \mathbb N^d} \langle m(H)f, \Phi_{\alpha} \rangle \Phi_{\alpha, 0}(z) \nonumber \\
& = & \sum_{\alpha \in \mathbb N^d} m(2|\alpha| +d) \langle f, \Phi_{\alpha} \rangle \Phi_{\alpha, 0}(z).\nonumber
\end{eqnarray}
Therefore, for $m(H) = e^{itH}$, we have \begin{eqnarray}\label{1}
\langle \pi(z)e^{itH}f, \Phi_0  \rangle  &=  &e^{itd} \sum_{\alpha \in \mathbb N^d} e^{2it|\alpha|} \langle f, \Phi_{\alpha} \rangle \Phi_{\alpha, 0}(z) \nonumber \\
&= &e^{itd} (2\pi)^{-d/2}  \sum_{\alpha \in \mathbb N^d} e^{2it|\alpha|} \langle f, \Phi_{\alpha} \rangle  (\alpha !)^{-1/2} \left(\frac{i}{\sqrt{2}} \right)^{|\alpha|}\bar{z}^{\alpha} e^{-\frac{1}{4} |z|^2}\nonumber \\
&= & e^{itd} (2\pi)^{-d/2}  \sum_{\alpha \in \mathbb N^d} C_{\alpha}(f)e^{2it|\alpha|} \bar{z}^{\alpha} e^{-\frac{1}{4} |z|^2}
\end{eqnarray}
where $ C_{\alpha}(f):= \langle f, \Phi_{\alpha} \rangle  (\alpha !)^{-1/2} \left(\frac{i}{\sqrt{2}} \right)^{|\alpha|}.$  In view of \eqref{ui} and \eqref{1}, we have 
\begin{align}\label{o}
\|e^{itH}f\|^p_{M^{p,p}}&=  \frac{1}{(2\pi)^{d/2}}\int_{\mathbb{C}^d}  \left |  \sum_{\alpha \in \mathbb N^d} C_{\alpha}(f)e^{2it|\alpha|} \bar{z}^{\alpha} e^{-\frac{1}{4} |z|^2}\, \right |^p dz.
\end{align} 
By using polar coordinates  $z_j= r_j e^{i \theta_j}$,  $r_j:=|z_j|\in [0, \infty), z_j\in \mathbb C$ and  $\theta_j \in [0, 2\pi),$ we get
\begin{eqnarray}\label{LPC}
z^{\alpha}= r^{\alpha} e^{i \alpha \cdot \theta} \ \  \text{and} \ \  dz= r_1 r_2 \cdots r_d d\theta dr
\end{eqnarray}
 where $r = (r_1,\cdots, r_d)$, $\theta = (\theta_1,\cdots,\theta_d)$, $dr = dr_1\cdots dr_d$, $d\theta = d\theta_1\cdots d\theta_d, |r|=\sqrt{\sum_{j=1}^d r_j^2}.$ \\ 
By writing the integral over $\mathbb C^d= \mathbb R^{2d}$ in polar coordinates in
each time-frequency pair and using  \eqref{LPC}, we have
\begin{eqnarray}\label{h} \int_{\mathbb{C}^d}  \left |  \sum_{\alpha \in \mathbb N^d} C_{\alpha}(f)e^{2it|\alpha|} \bar{z}^{\alpha} e^{-\frac{1}{4} |z|^2}\, \right |^p dz&\\
=\prod_{j=1}^d
 \int_{\mathbb{R}^+}\int_{[0,2\pi]} \left|\sum_{\alpha  \in \mathbb N^d } C_{\alpha}(f)  r^{\alpha}  e^{i \sum_{j=1}^d(2t-\theta_j) \alpha_j} e^{-\frac{1}{4} |r|^2} \right |^p r_j  dr_j d\theta_j.  \nonumber 
\end{eqnarray}
By a simple change of variable $(\theta_j-2t)\rightarrow \theta_j$, we obtain
\begin{eqnarray}\label{b}  \prod_{j=1}^d
 \int_{\mathbb{R}^+}\int_{[0,2\pi]} \left|\sum_{\alpha \in \mathbb N^d } C_{\alpha}(f)  r^{\alpha} e^{i \sum_{j=1}^d(2t-\theta_j) \alpha_j} e^{-\frac{1}{4} |r|^2} \right |^p r_j  dr_j d\theta_j  \\
 = \prod_{j=1}^d \int_{\mathbb{R}^+}\int_{[0,2\pi]} \left|\sum_{\alpha \in \mathbb N^d } C_{\alpha}(f)  r^{\alpha} e^{-i \theta \cdot \alpha} e^{-\frac{1}{4} |r|^2} \right |^p r_j  dr_j d\theta_j . \nonumber
\end{eqnarray}
Combining  \eqref{o}, \eqref{h},  \eqref{b}, and Lemma \ref{pl}\eqref{d},  we have $\|e^{itH}f \|_{M^{p,p}} = \|f\|_{M^{p,p}}$ for $f\in M^{p,p}(\mathbb R^d).$ 
\end{proof}
\begin{Remark}\label{nrr} In view of \eqref{pihe} and \eqref{1},  in Theorem \ref{mso}, we cannot expect to replace $M^{p,p}$ norm by $M^{p,q}$ norm. See also \cite{kks}.
\end{Remark}

\section{Global wellposedness in $L^2(\mathbb R^d)$}\label{smt}
In this section we prove global well-posedness for  \eqref{HTEH} with Cauchy data in  $L^2(\mathbb R^d).$   To this end, we need Strichartz estimates, and hence we recall it.
\begin{Definition} A pair $(q,r)$ is admissible if  $2\leq r< \frac{2d}{d-2}$ ($2\leq r \leq \infty$ if $d=1$ and  $2\leq r < \infty$ if $d=2$)
$$\frac{2}{q} =  d \left( \frac{1}{2} - \frac{1}{r} \right).$$ 
\end{Definition}

\begin{Proposition}[\cite{rc}] \label{seh}For any time slab $I$ and  admissible pairs $(p_i, q_i) (i=1,2):$
\begin{enumerate}
\item There exists $C_r$ such that 
$$\|U(t) \phi  \|_{L^{q,r}_{t,x}} \leq C_r \|\phi \|_{L^2}, \   \forall \phi \in L^2 (\mathbb R^d).$$
\item Define 
$$DF(t,x) = \int_0^t U(t-\tau )F(\tau,x) d\tau.$$
There exists  $C=C(|I|, q_1)$ (constant) such that for all intervals $I \ni 0, $
$$ \|D(F)\|_{L^{p_1, q_1}_{t,x}}  \leq C  \|F\|_{L^{p_2', q_2'}_{t,x}}, \ \forall F \in L^{p_2'} (I, L^{q_2'})$$  where $p_i'$ and $ q_i'$ are H\"older conjugates of $p_i$ and $q_i$
respectively.
\end{enumerate}
\end{Proposition}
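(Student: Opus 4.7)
My strategy would be to mirror the classical proof of Strichartz estimates for the free Schr\"odinger equation, replacing the Euclidean dispersive decay $|t|^{-d/2}$ by the analogous bound obtained from the Mehler kernel. The starting point is the explicit representation
\[
U(t)\phi(x) \;=\; \frac{1}{(2\pi i\sin(2t))^{d/2}}\int_{\mathbb R^d} e^{\,\tfrac{i}{\sin(2t)}\bigl(\tfrac{|x|^2+|y|^2}{2}\cos(2t)\,-\,x\cdot y\bigr)}\phi(y)\,dy,
\]
valid for $t\notin \tfrac{\pi}{2}\mathbb Z$. Reading off the modulus of the kernel immediately gives the short-time dispersive estimate $\|U(t)\phi\|_{L^\infty}\lesssim |\sin(2t)|^{-d/2}\|\phi\|_{L^1}$, which, restricted to $|t|\le\pi/4$, behaves exactly like $|t|^{-d/2}$.

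Second, I would interpolate this bound against the unitarity $\|U(t)\phi\|_{L^2}=\|\phi\|_{L^2}$ (which follows from the spectral representation \eqref{dhm}) via Riesz--Thorin to get
\[
\|U(t)\phi\|_{L^r}\;\lesssim\;|\sin(2t)|^{-d(\frac12-\frac1r)}\|\phi\|_{L^{r'}},\qquad 2\le r\le\infty.
\]
From here the homogeneous estimate in (1) is a direct application of the abstract Keel--Tao $TT^{\ast}$ machinery on a fixed short interval $I_0\subset(-\pi/4,\pi/4)$: the operator $T=U(t)$ from $L^2_x$ to $L^{q,r}_{t,x}(I_0)$ has adjoint $T^{\ast}F=\int_{I_0}U(-\tau)F(\tau)\,d\tau$, and the kernel $U(t-\tau)$ of $TT^{\ast}$ satisfies the decay hypothesis of Keel--Tao on $I_0\times I_0$, yielding both the endpoint and non-endpoint admissible cases. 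The retarded inhomogeneous estimate in (2) is obtained from the same framework together with the Christ--Kiselev lemma to handle the truncated integral $\int_0^t$.

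Third, to upgrade from $I_0$ to an arbitrary time slab $I$, I would partition $I$ into finitely many subintervals $I_1,\ldots,I_N$ each of length less than $\pi/4$, with $N\simeq |I|$. On each subinterval the local estimates above apply with a uniform constant. I then glue using the group law $U(t)=U(t-s)U(s)$ and the $L^2$-isometry property: passing from subinterval to subinterval costs only a bounded factor per step, accumulating to a constant $C(|I|,q_1)$ in (2). This dependence on $|I|$ is unavoidable, because the Mehler kernel becomes singular at the focal times $t=k\pi/2$ and, unlike the Euclidean case, the factor $|\sin(2t)|^{-d/2}$ does not produce a global-in-time improvement.

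The main obstacle is precisely this focal-point phenomenon: one must be careful, when tiling $I$, that no single subinterval crosses a focal time without being split, and one must verify that the $TT^{\ast}$ computation on each piece produces a constant independent of the piece's location. Once the subinterval size is fixed below $\pi/2$, the Mehler weight is bounded and the constants are uniform, so the gluing step is routine; conceptually this is the only place where the harmonic oscillator departs from the free Laplacian, and it is the reason the constant in (2) must depend on $|I|$ rather than being absolute as in the Euclidean Strichartz estimates.
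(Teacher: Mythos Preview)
Your plan is sound and is precisely the standard route to these estimates: the Mehler kernel gives the local-in-time dispersive bound $\|U(t)\phi\|_{L^\infty}\lesssim |\sin 2t|^{-d/2}\|\phi\|_{L^1}$, interpolation with $L^2$-unitarity feeds the Keel--Tao machine on any subinterval avoiding the focal times $t\in\tfrac{\pi}{2}\mathbb Z$, and a finite partition of $I$ together with the group property yields the constant $C(|I|,q_1)$ in part~(2).

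Note, however, that the paper does not prove this proposition at all: it is simply quoted from the cited reference~\cite{rc} (Carles), where the argument is essentially the one you describe. So there is no ``paper's own proof'' to compare against; your outline reproduces the content of the cited source. One minor point: the admissibility condition in the paper excludes the endpoint $r=\tfrac{2d}{d-2}$, so invoking the full Keel--Tao endpoint result is not strictly necessary here---the older Ginibre--Velo $TT^\ast$ argument combined with Hardy--Littlewood--Sobolev already suffices, and Christ--Kiselev handles the retarded operator without further subtlety.
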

We also need to work with the  convolution with the Hartree potential  $|x|^{-\gamma},$  so for the convenience of reader we recall:
\begin{Proposition}[Hardy-Littlewood-Sobolev inequality] \label{hls}Assume that  $0<\gamma< d$ and $1<p<q< \infty$ with
$$\frac{1}{p}+\frac{\gamma}{d}-1= \frac{1}{q}.$$
Then the map $f \mapsto |x|^{-\gamma}\ast f$ is bounded from $L^p(\mathbb R^d)$ to $L^q(\mathbb R^d):$
$$\||x|^{-\gamma}\ast f\|_{L^q} \leq C_{d,\gamma, p} \|f\|_{L^p}.$$
\end{Proposition}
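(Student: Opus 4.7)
The plan is to prove this standard Hardy-Littlewood-Sobolev inequality by Hedberg's pointwise method: I would bound the Riesz-type integral $Tf(x) := (|\cdot|^{-\gamma}\ast f)(x)$ pointwise by a geometric combination of $\|f\|_{L^p}$ and the Hardy-Littlewood maximal function $Mf(x)$, and then close the estimate using the strong-type $(p,p)$ maximal inequality, which is available precisely because $1<p<\infty$.

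First I would split the integral at an arbitrary radius $R>0$ as $|Tf(x)| \leq I_1(x,R) + I_2(x,R)$, where $I_1$ is the integral over $\{|x-y|\leq R\}$ and $I_2$ over $\{|x-y|>R\}$. Decomposing $I_1$ into dyadic annuli $A_k = \{2^{-k-1}R < |x-y| \leq 2^{-k}R\}$, using the bound $|x-y|^{-\gamma} \leq (2^{-k-1}R)^{-\gamma}$ on each annulus, and controlling the mass of $|f|$ over the enclosing ball by a multiple of $(2^{-k}R)^d\, Mf(x)$, summing the resulting geometric series yields $I_1(x,R) \lesssim R^{d-\gamma}\, Mf(x)$. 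For $I_2$, Hölder's inequality with conjugate exponent $p'$ gives $I_2(x,R) \leq \|f\|_{L^p} \bigl(\int_{|y|>R} |y|^{-\gamma p'}\,dy\bigr)^{1/p'} \lesssim R^{d/p'-\gamma}\, \|f\|_{L^p}$, and the integral converges exactly because the scaling hypothesis $\tfrac{1}{p}+\tfrac{\gamma}{d}-1 = \tfrac{1}{q}$, rewritten as $\tfrac{\gamma}{d} = \tfrac{1}{p'} + \tfrac{1}{q}$, combined with $q<\infty$, forces $\gamma p' > d$. Next I would balance the two bounds by the natural choice $R = (\|f\|_{L^p}/Mf(x))^{p/d}$; a short exponent computation using the identity $(d-\gamma)p/d = 1 - p/q$ (a direct consequence of the scaling relation) produces the pointwise estimate $|Tf(x)| \lesssim \|f\|_{L^p}^{1-p/q}\, (Mf(x))^{p/q}$. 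Raising to the $q$th power, integrating over $\mathbb R^d$, and invoking $\|Mf\|_{L^p} \lesssim \|f\|_{L^p}$ completes the proof of $\|Tf\|_{L^q} \lesssim \|f\|_{L^p}$.

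The only delicate point is the bookkeeping of exponents, particularly verifying that the scaling relation converts $(d-\gamma)p/d$ into $1-p/q$ so that the integrand lives precisely at the $L^p$ level where the Hardy-Littlewood maximal inequality applies. The exclusion of the endpoints $p=1$ and $q=\infty$ from the hypothesis is essential to this argument: at $p=1$ the maximal operator is merely of weak type $(1,1)$, and at $q=\infty$ the far-field integral $\int_{|y|>R} |y|^{-\gamma p'}\,dy$ diverges. The endpoints can be recovered through weak-type substitutes and Marcinkiewicz interpolation (or $\mathrm{BMO}$ substitutes), but this refinement is not needed for the applications in the paper.
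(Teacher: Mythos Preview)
Your argument via Hedberg's pointwise inequality is correct: the dyadic near-field estimate, the H\"older far-field estimate with the convergence check $\gamma p' > d$, the optimization in $R$, and the exponent identity $(d-\gamma)p/d = 1 - p/q$ all go through exactly as you describe, and the closing step with the strong $(p,p)$ maximal inequality is legitimate since $1<p<\infty$. The only cosmetic point is that the optimization tacitly assumes $0 < Mf(x) < \infty$, but this holds almost everywhere for nontrivial $f \in L^p$, so no issue arises.

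As for comparison: the paper does not supply a proof of this proposition at all --- it is recorded as the classical Hardy--Littlewood--Sobolev inequality and used as a black box (the paper's interest is in the nonlinear Schr\"odinger equation, not in reproving standard harmonic-analysis tools). Your write-up therefore goes beyond what the paper provides; Hedberg's method is one of the standard self-contained proofs, and it is a natural choice here since it avoids interpolation machinery and makes the role of the endpoint restrictions $p>1$, $q<\infty$ transparent.
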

\begin{Proposition}\label{miD}Let    $F(u)$   and  $K$ be given by  \eqref{htn} and \eqref{hk}  respectively with $k=1,$ and 
 $ 0<\gamma < \text{min} \{2, d/2\}, d\in \mathbb N$. If $u_{0}\in L^{2}(\mathbb R^{d}),$ then \eqref{HTEH} has a unique global solution 
$$u\in C([0, \infty), L^{2}(\mathbb R^d))\cap L^{8/\gamma}_{loc}([0, \infty), L^{4d/(2d-\gamma)} (\mathbb R^d)).$$ 
In addition, its $L^{2}-$norm is conserved, 
$$\|u(t)\|_{L^{2}}=\|u_{0}\|_{L^{2}}, \   \forall t \in \mathbb R,$$
and for all  admissible pairs  $(p,q), u \in L_{loc}^{p}(\mathbb R, L^{q}(\mathbb R^d)).$
\end{Proposition}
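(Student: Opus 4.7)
The plan is to set up a Banach fixed-point argument on the Duhamel formulation
\[
u(t) = U(t)u_0 - i\int_0^t U(t-\tau)\bigl[(|x|^{-\gamma} * |u(\tau)|^2)\,u(\tau)\bigr]\,d\tau
\]
in the Strichartz space $X_T = C([0,T], L^2(\mathbb R^d)) \cap L^p([0,T], L^q(\mathbb R^d))$ with the pair $(p,q) = (8/\gamma,\, 4d/(2d-\gamma))$. First I would verify admissibility: $2/p = \gamma/4 = d(1/2 - 1/q)$, and $q < 2d/(d-2)$ follows from $\gamma < 2$ when $d\geq 3$, while $q<\infty$ is automatic for $d=1,2$. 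Combined with the admissible pair $(\infty,2)$, Strichartz (Proposition \ref{seh}) then yields
\[
\|u\|_{L^\infty([0,T], L^2)} + \|u\|_{L^p([0,T], L^q)} \leq C\|u_0\|_{L^2} + C\|F(u)\|_{L^{p'}([0,T], L^{q'})}.
\]

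The heart of the matter is the nonlinear estimate
\[
\|F(u) - F(v)\|_{L^{p'}([0,T], L^{q'})} \leq C\, T^{(2-\gamma)/2}\bigl(\|u\|_{L^p_t L^q_x}^2 + \|v\|_{L^p_t L^q_x}^2\bigr)\|u-v\|_{L^p_t L^q_x}.
\]
To prove it, I would work pointwise in $t$: H\"older in space with exponents $(2d/\gamma,\, q)$ gives $\|F(u)(t)\|_{L^{q'}_x} \lesssim \||x|^{-\gamma} * |u|^2\|_{L^{2d/\gamma}_x}\|u\|_{L^q_x}$, and Hardy-Littlewood-Sobolev (Proposition \ref{hls}) with $1/b + \gamma/d - 1 = \gamma/(2d)$, i.e.\ $b = 2d/(2d-\gamma)$ and $2b = q$, yields $\||x|^{-\gamma}*|u|^2\|_{L^{2d/\gamma}_x} \lesssim \|u\|_{L^q_x}^2$, so $\|F(u)(t)\|_{L^{q'}_x} \lesssim \|u(t)\|_{L^q_x}^3$. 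The difference version follows from $|u|^2 - |v|^2 = (u-v)\bar u + v\,\overline{(u-v)}$ by the same chain. Finally, H\"older in time with exponent $p/(3p') = (8-\gamma)/(3\gamma)$, which is strictly $>1$ precisely because $\gamma<2$, produces the factor $T^{1/p'-3/p} = T^{(2-\gamma)/2}$, and this strictly positive power of $T$ is what makes the contraction close.

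Assembling these pieces, the Duhamel map sends a ball of radius $2C\|u_0\|_{L^2}$ in $X_T$ into itself and is a contraction for $T = T(\|u_0\|_{L^2})$ small enough, giving local existence and uniqueness in $C([0,T], L^2) \cap L^p([0,T], L^q)$; plugging this back into Strichartz with any other admissible pair $(\tilde p, \tilde q)$ then places $u$ in $L^{\tilde p}_{\mathrm{loc}}(\mathbb R^+, L^{\tilde q})$. For mass conservation, I would approximate $u_0$ in $L^2$ by Schwartz data $u_0^n$; the corresponding smooth solutions $u^n$ satisfy $\tfrac{d}{dt}\|u^n\|_{L^2}^2 = 2\,\mathrm{Im}\langle u^n, Hu^n + F(u^n)\rangle = 0$ rigorously, the $H$-contribution vanishing by self-adjointness of $H$ and the Hartree contribution because $(|x|^{-\gamma}*|u^n|^2)|u^n|^2$ is real and integrable, and stability of the fixed point under perturbation of data transfers the identity $\|u^n(t)\|_{L^2} = \|u_0^n\|_{L^2}$ to the limit. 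Since the local existence time depends only on the $L^2$-norm, mass conservation lets me iterate indefinitely to extend $u$ to all of $[0,\infty)$. The main obstacle is the nonlinear estimate above: threading HLS, H\"older-in-space and H\"older-in-time so that all three factors of $u$ land in the same Strichartz norm $L^p_t L^q_x$ with a genuinely positive power of $T$ is exactly what forces the restriction $\gamma < 2$.
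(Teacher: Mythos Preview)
Your proposal is correct and follows essentially the same approach as the paper: a Strichartz fixed-point argument on the Duhamel formulation with the admissible pair $(8/\gamma,\,4d/(2d-\gamma))$, the nonlinear estimate via Hardy--Littlewood--Sobolev and H\"older yielding the factor $T^{(2-\gamma)/2}$, and globalization through $L^2$-conservation. The only cosmetic difference is bookkeeping order: the paper first applies H\"older in time (splitting the $L^{q'}_t$ norm as $L^{4/(4-\gamma)}_t \cdot L^{q}_t$) and then HLS in space, whereas you estimate $\|F(u)(t)\|_{L^{q'}_x} \lesssim \|u(t)\|_{L^q_x}^3$ pointwise in $t$ and then apply H\"older in time; both routes produce the same power $T^{1-\gamma/2}$.
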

\begin{proof} By Duhamel's formula, we write \eqref{HTEH}
as 
$$u(t)=U(t)u_0- i \int_0^t U(t-\tau) (K \ast |u|^2)u(\tau) d\tau:= \Phi(u)(t).$$  We introduce the space
\begin{eqnarray*}
Y(T)  & =  &\{ \phi \in C\left([0,T], L^2(\mathbb R^d) \right): \|\phi \|_{L^{\infty}([0, T], L^2)}  \leq 2 \|u_0\|_{L^2}, \\
&&  \|\phi\|_{L^{\frac{8}{\gamma}} ([0,T], L^{\frac{4d}{2d-\gamma}}  ) } \lesssim \|u_0\|_{L^2}\}
\end{eqnarray*}
and the distance 
$$d(\phi_1, \phi_2)= \|\phi_1 - \phi_2 \|_{L^{\frac{8}{\gamma} }\left( [0, T], L^{\frac{4d}{(2d- \gamma)}}\right)}.$$ Then $(Y, d)$ is a complete metric space. Now we show that $\Phi$ takes $Y(T)$ to $Y(T)$ for some $T>0.$
We put 
$$ \ q= \frac{8}{\gamma}, \  r= \frac{4d}{2d- \gamma}.$$ 
Note that $(q,r)$ is  admissible and 
$$ \frac{1}{q'}= \frac{4- \gamma}{4} + \frac{1}{q}, \  \frac{1}{r'}= \frac{\gamma}{2d} + \frac{1}{r}.$$
Let $(\bar{q}, \bar{r}) \in \{ (q,r), (\infty, 2) \}.$ By Proposition \ref{seh} and  H\"older inequality, we have 
\begin{eqnarray*}
\|\Phi(u)\|_{L_{t,x}^{\bar{q}, \bar{r}}} &  \lesssim &  \|u_0\|_{L^2} + \|(K \ast |u|^2)u \|_{L_{t,x}^{q',r'}}\\
& \lesssim &  \|u_0\|_{L^2} + \|K \ast |u|^2\|_{L_{t,x}^{\frac{4}{4-\gamma}, \frac{2d}{\gamma}}} 
\|u\|_{L^{q,r}_{t,x}}.
\end{eqnarray*}
Since $0<\gamma< \min \{2, d \}$, by Proposition \ref{hls},  we  have 
\begin{eqnarray*}
\|K \ast |u|^2\|_{L_{t,x}^{\frac{4}{4-\gamma}, \frac{2d}{\gamma}}}  & = &  \left\|  \|K\ast |u|^2\|_{L_x^{\frac{2d}{\gamma}}} \right\|_{L_t^{\frac{4}{4- \gamma}}}\\
& \lesssim &   \left \| \||u|^2\|_{L_x^{\frac{2d}{2d- \gamma}}} \right\|_{L_t^{\frac{4}{4- \gamma}}} \\
& \lesssim & \|u\|^2_{L_{t,x}^{\frac{8}{4- \gamma},r}}\\
& \lesssim & T^{1- \frac{\gamma}{2}} \|u\|^2_{L_{t,x}^{q,r}}.
\end{eqnarray*}
(In the last inequality we have used inclusion relation for the $L^p$ spaces on finite measure spaces: $\|\cdot\|_{L^p(X)} \leq \mu(X)^{\frac{1}{p}-\frac{1}{q}} \|\cdot \|_{L^{q}(X)}$ if measure of $X$:$\mu(X)<\infty, 0<p<q<\infty$.) 
Thus, we  have 
$$
\|\Phi(u)\|_{L_{t,x}^{\bar{q}, \bar{r}}}   \lesssim   \|u_0\|_{L^2}+T^{1- \frac{\gamma}{2}} \|u\|^3_{L_{t,x}^{q,r}}.$$
This shows that $\Phi$ maps $Y(T)$ to $Y(T).$  Next, we show $\Phi$
is a  contraction. For this, as 
 calculations performed  before, first we note  that 
\begin{eqnarray}\label{mi}
 \|(K\ast |v|^{2})(v-w)\|_{L_{t,x}^{q',r'}} \lesssim  T^{1-\frac{\gamma}{2}} \|v\|^2_{L_{t,x}^{q,r}} \|v-w\|_{L_{t,x}^{q,r}}.
\end{eqnarray}
Put $\delta = \frac{8}{4-\gamma}$  and notice that \[\frac{1}{q'}= \frac{1}{2}+ \frac{1}{\delta}, \frac{1}{r'}= \frac{1}{r}+ \frac{\gamma}{2d},  \frac{1}{2}= \frac{1}{\delta} + \frac{1}{q}.\] Now using H\"older inequality, we obtain
\begin{eqnarray}\label{mi1}
 \|(K \ast (|v|^{2}- |w|^{2}))w\|_{L_{t,x}^{q',r'}} & \lesssim & \| K\ast \left( |v|^2-|w|^2\right)\|_{L_{t,x}^{2, \frac{2d}{\gamma}}} \|w\|_{L_{t,x}^{\delta, r}} \nonumber \\
 & \lesssim & \left( \|K \ast (v (\bar{v}- \bar{w})) \|_{L_{t,x}^{2, \frac{2d}{\gamma}}}\right. \nonumber
 \\
 && \left.+ \|K \ast \bar{w}(v-w)) \|_{L_{t,x}^{2, \frac{2d}{\gamma}}} \right) \|w\|_{L_{t,x}^{\delta,r}} \nonumber\\
 & \lesssim & \left( \|v\|_{L_{t,x}^{\delta,r}} \|w\|_ {L_{t,x}^{\delta,r}} +\|w\|^2_ {L_{t,x}^{\delta,r}}\right) \|v-w\|_{L_{t,x}^{q,r}}\nonumber\\
 & \lesssim & T^{1-\frac{\gamma}{2}}  \left( \|v\|_{L_{t,x}^{q,r}} \|w\|_ {L_{t,x}^{q,r}} +\|w\|^2_ {L_{t,x}^{q,r}}\right)  \|v-w\|_{L_{t,x}^{q,r}}.
\end{eqnarray}
In view of the identity
$$(K\ast |v|^{2})v- (K\ast |w|^{2})w= (K\ast |v|^{2})(v-w) + (K \ast (|v|^{2}- |w|^{2}))w, $$  \eqref{mi}, and \eqref{mi1} gives
\begin{eqnarray*}
\|\Phi(v)- \Phi(w)\|_{L_{t,x}^{q,r}} & \lesssim &  \|(K\ast |v|^{2})(v-w)\|_{L_{t,x}^{q',r'}} + \|(K \ast (|v|^{2}- |w|^{2}))w\|_{L_{t,x}^{q',r'}}\\
& \lesssim &   T^{1-\frac{\gamma}{2}}  \left( \|v\|^2_{L_{t,x}^{q,r}}  +\|v\|_{L_{t,x}^{q,r}} \|w\|_ {L_{t,x}^{q,r}} +\|w\|^2_ {L_{t,x}^{q,r}}\right)\\
&&  \|v-w\|_{L_{t,x}^{q,r}}.
\end{eqnarray*}
Thus $\Phi$ is a contraction form $Y(T)$ to $Y(T)$  provided that $T$ is sufficiently small. Then there exists a unique $u \in Y(T)$ solving \eqref{HTEH}. The global  existence of the solution \eqref{HTEH} follows from the conservation of the $L^2-$norm of $u.$ The last property of the proposition then follows from the Strichartz estimates applied with an arbitrary  admissible pair on the left hand side and the same pairs as above on the right hand side.
\end{proof}
\section{Proof of the Main Results}
\subsection{Global well-posedness in $M^{p,p}$ for the Hartree potential}\label{pmt}
In this section, we shall prove Theorem \ref{mt}.
For  convenience of  the reader, we recall
\begin{Proposition}[\cite{baho}]\label{fc} 
Let $d\geq 1,$  $0<\gamma <d$ and $\lambda \in \mathbb R.$ There exists $C=C(d,\gamma)$ such that the Fourier transform of $K$ defined by \eqref{hk} is
\begin{eqnarray*}
\widehat{K}(\xi)= \frac{\lambda C}{|\xi|^{d-\gamma}}.
\end{eqnarray*}
\end{Proposition}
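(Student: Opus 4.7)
The plan is to establish this as the standard Riesz potential identity, proceeding in two stages: first show that $\widehat{K}$ must be a radial homogeneous distribution of the correct degree by symmetry considerations, and then compute (or simply name) the constant by pairing against a Gaussian.

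First I would verify that $K(x) = \lambda |x|^{-\gamma}$ defines a tempered distribution on $\mathbb{R}^d$ when $0 < \gamma < d$. This is immediate: $|x|^{-\gamma}$ is locally integrable near the origin since $\gamma < d$, and decays to a bounded function at infinity, so $K \in L^1_{\mathrm{loc}} + L^\infty \subset \mathcal{S}'(\mathbb{R}^d)$. Hence $\widehat{K}$ is well-defined as a tempered distribution.

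Next I would exploit two symmetries of $K$. For $a > 0$, write $\delta_a f(x) := f(ax)$. Since $\delta_a K = a^{-\gamma} K$, applying the dilation rule $\widehat{\delta_a f}(\xi) = a^{-d} \widehat{f}(\xi/a)$ gives
\[
a^{-d}\, \widehat{K}(\xi/a) = a^{-\gamma}\, \widehat{K}(\xi),
\]
so $\widehat{K}$ is homogeneous of degree $\gamma - d$. Moreover, $K$ is invariant under every rotation $R \in O(d)$, and the Fourier transform commutes with rotations, so $\widehat{K}$ is radial. A radial tempered distribution on $\mathbb{R}^d \setminus \{0\}$ that is homogeneous of degree $\gamma - d$ must agree (on $\mathbb{R}^d \setminus \{0\}$) with a constant multiple of $|\xi|^{\gamma - d}$; the condition $\gamma - d > -d$ guarantees that this function is locally integrable, so there is no distributional correction supported at the origin. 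This gives $\widehat{K}(\xi) = \lambda C |\xi|^{-(d-\gamma)}$ for a constant $C = C(d,\gamma)$.

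To pin down $C$ if desired, I would test against the Gaussian $g(x) = e^{-|x|^2/2}$, which is its own Fourier transform up to normalization. Using Parseval and polar coordinates, both $\langle K/\lambda, g\rangle$ and $\langle |\xi|^{-(d-\gamma)}, \widehat{g}\rangle$ reduce to one-dimensional integrals of the form $\int_0^\infty r^{s-1} e^{-r^2/2}\, dr$, which evaluate to Gamma functions; taking the ratio produces the explicit constant $C(d,\gamma) = 2^{d/2 - \gamma}\,\Gamma((d-\gamma)/2)/\Gamma(\gamma/2)$. Since the statement only asserts existence of $C$, this computation is optional. The only subtle step is the passage from ``homogeneous and radial of degree $\gamma-d$ on $\mathbb{R}^d \setminus \{0\}$'' to ``a genuine constant multiple of $|\xi|^{\gamma-d}$ as a tempered distribution on all of $\mathbb{R}^d$'', and this is exactly where the hypothesis $0 < \gamma < d$ is used to rule out additional distributions supported at the origin.
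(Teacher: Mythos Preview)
Your argument is correct and is the standard route to the Riesz potential identity: tempered distribution, homogeneity plus radiality forcing the form $c|\xi|^{\gamma-d}$ on $\mathbb{R}^d\setminus\{0\}$, and the degree condition $\gamma-d>-d$ ruling out corrections at the origin. The optional Gaussian computation is also fine.

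The paper, however, does not prove this proposition at all: it is stated with a citation to Bahouri--Chemin--Danchin and used as a black box. So there is nothing to compare on the level of method; you have supplied a self-contained proof where the paper simply invokes the literature.
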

We start with decomposing the Fourier transform of Hartree potential into Lebesgue spaces: indeed, in view of Proposition \ref{fc}, we  have
\begin{eqnarray}\label{dc}
\widehat{K}=k_1+k_2 \in L^{p}(\mathbb R^{d})+ L^{q}(\mathbb R^{d}),
\end{eqnarray}
where  $k_{1}:= \chi_{\{|\xi|\leq 1\}}\widehat{K} \in L^{p}(\mathbb R^{d})$ for all $p\in [1, \frac{d}{d-\gamma})$ and $k_{2}:= \chi_{\{|\xi|>1\}} \widehat{K} \in L^{q}(\mathbb R^{d})$ for all $q\in (\frac{d}{d-\gamma}, \infty].$
\begin{Lemma} [\cite{sdgb}]\label{iml} Let  $K$ be given by \eqref{hk} with $\lambda \in \mathbb R,$ and $ 0<\gamma < d$, and  $1\leq p \leq 2, 1\leq q < \frac{2d}{d+\gamma}$. Then for any $f\in M^{p,q}(\mathbb R^{d}),$ we have
\begin{eqnarray}\label{d1}
\|(K\ast |f|^{2}) f\|_{M^{p,q}} \lesssim\|f\|_{M^{p,q}}^{3}.
\end{eqnarray}
\end{Lemma}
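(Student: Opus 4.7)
The plan is to exploit the $\mathcal{F}L^{1}$-module property of $M^{p,q}$ from Theorem~\ref{pl}\eqref{mp}, which gives
\[
\|(K\ast|f|^{2})f\|_{M^{p,q}}\lesssim \|K\ast|f|^{2}\|_{\mathcal{F}L^{1}}\|f\|_{M^{p,q}}.
\]
Since $\mathcal{F}(K\ast g)$ is a constant multiple of $\widehat{K}\,\widehat{g}$, one is reduced to controlling
\[
\|\widehat{K}\,\widehat{|f|^{2}}\|_{L^{1}}.
\]
I would then insert the decomposition \eqref{dc}, $\widehat{K}=k_{1}+k_{2}$ with $k_{1}\in L^{1}$ (compactly supported) and $k_{2}\in L^{s}$ for every $s\in (d/(d-\gamma),\infty]$, and apply H\"older separately on each piece.

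For the low-frequency piece, H\"older gives $\|k_{1}\widehat{|f|^{2}}\|_{L^{1}}\leq\|k_{1}\|_{L^{1}}\|\widehat{|f|^{2}}\|_{L^{\infty}}\leq\|k_{1}\|_{L^{1}}\|\,|f|^{2}\|_{L^{1}}=\|k_{1}\|_{L^{1}}\|f\|_{L^{2}}^{2}$, and the inclusion $M^{p,q}\hookrightarrow M^{2,q}\hookrightarrow L^{2}$ (valid in our range since $p\leq 2$ and $q<2d/(d+\gamma)\leq 2$, combining Lemma~\ref{rl}\eqref{ir} and \eqref{el}) converts this into $\lesssim\|f\|_{M^{p,q}}^{2}$.

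For the high-frequency piece, I would choose $s>d/(d-\gamma)$ (to be fixed below) and write
\[
\|k_{2}\widehat{|f|^{2}}\|_{L^{1}}\leq\|k_{2}\|_{L^{s}}\|\widehat{|f|^{2}}\|_{L^{s'}}=\|k_{2}\|_{L^{s}}\|\,|f|^{2}\|_{\mathcal{F}L^{s'}}.
\]
The embedding $M^{\min\{s,2\},s'}\hookrightarrow\mathcal{F}L^{s'}$ from Lemma~\ref{rl}\eqref{rcs} reduces matters to an estimate on $\|\,|f|^{2}\|_{M^{\min\{s,2\},s'}}$. Using conjugation invariance (Lemma~\ref{rl}\eqref{ic}) together with the algebra property of Theorem~\ref{pl}\eqref{mp} applied to $|f|^{2}=f\cdot\bar f$, and then monotonicity of modulation norms in the exponents, I expect a clean estimate of the form
\[
\|\,|f|^{2}\|_{M^{a,b}}\lesssim\|f\|_{M^{p,q}}^{2}\qquad\text{whenever }a\geq\max\{1,p/2\},\ b\geq q/(2-q),
\]
obtained by taking equal indices $p_{1}=p_{2}=2a$, $q_{1}=q_{2}=2b/(b+1)$ and dominating each factor via $M^{p,q}\hookrightarrow M^{2a,2b/(b+1)}$.

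The main obstacle and the only place where the precise range of $q$ enters is the consistency of these indices: I need $s>d/(d-\gamma)$ (so that $k_{2}\in L^{s}$) and simultaneously $s'\geq q/(2-q)$, i.e., $s\leq q/(2(q-1))$ when $q>1$. The interval $\bigl(d/(d-\gamma),\,q/(2(q-1))\bigr]$ is nonempty precisely when $q(d+\gamma)<2d$, i.e., exactly the hypothesis $q<2d/(d+\gamma)$; for $q\leq 1$ any large $s$ works, so the constraint is automatic. Picking such an $s$ and combining the three displays with $\|f\|_{M^{p,q}}\geq\|f\|_{L^{2}}$ yields $\|\widehat{K}\widehat{|f|^{2}}\|_{L^{1}}\lesssim\|f\|_{M^{p,q}}^{2}$, which after the initial module step gives \eqref{d1}.
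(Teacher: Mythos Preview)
Your proof is correct and follows essentially the same route as the paper: the $\mathcal{F}L^{1}$-module step, the splitting $\widehat{K}=k_{1}+k_{2}$ from \eqref{dc}, the $L^{1}\times L^{\infty}$ H\"older bound on the low-frequency piece, and the $L^{s}\times L^{s'}$ H\"older bound on the high-frequency piece are all identical, and your index analysis yielding the constraint $q<2d/(d+\gamma)$ matches the paper's. The only cosmetic difference is in how $\|\widehat{|f|^{2}}\|_{L^{s'}}$ is handled: the paper applies Young's inequality directly on the Fourier side, $\|\hat f\ast\hat{\bar f}\|_{L^{s'}}\lesssim\|\hat f\|_{L^{r_{1}}}^{2}$ with $r_{1}=2s/(2s-1)$, and then invokes $\mathcal{F}L^{r_{1}}\supset M^{2,r_{1}}$, whereas you first embed $\mathcal{F}L^{s'}\supset M^{\min\{s,2\},s'}$ and then use the product estimate of Theorem~\ref{pl}\eqref{mp}; both choices produce exactly the same condition $s\le q/(2(q-1))$.
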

\begin{proof}
By \eqref{mp}  and  \eqref{dc}, we have
\begin{eqnarray}\label{md1}
\|(K\ast |f|^{2}) f)\|_{M^{p,q}} & \lesssim  & \|K\ast |f|^{2}\|_{\mathcal{F}L^{1}} \|f\|_{M^{p,q}}\nonumber \\
& \lesssim & \left(\|k_{1} \widehat{|f|^{2}}\|_{L^{1}} + \|k_{2} \widehat{|f|^{2}}\|_{L^{1}} \right) \|f\|_{M^{p,q}}. 
\end{eqnarray}
Note that 
\begin{eqnarray}.\label{md2}
\|k_1 \widehat{|f|^2}\|_{L^1}  & \lesssim &  \|k_1\|_{L^1} \|\widehat{|f|^2}\|_{L^{\infty}} \nonumber \\
& \lesssim & \||f|^2\|_{L^1}= \|f\|_{L^2}^2 \nonumber \\
& \lesssim & \|f\|_{M^{p,q}}^2.
\end{eqnarray}
Let $1< \frac{d}{d-\gamma}<r\leq 2, \frac{1}{r}+ \frac{1}{r'}=1.$  Note that $\frac{1}{r'}+1= \frac{1}{r_1}+ \frac{1}{r_2},$ where $r_1=r_2:= \frac{2r}{2r-1} \in [1,2]$, and $r_1' \in [2, \infty]$ where $\frac{1}{r_1}+ \frac{1}{r_1^{'}}=1.$ 
Now  using  Young's inequality for convolution, Lemma \ref{rl} \eqref{rcs}, Lemma \ref{rl} \eqref{ir}, and Lemma \ref{rl} \eqref{ic},  we obtain
\begin{eqnarray}
\|k_2\widehat{|f|^2} \|_{L^1} & \leq & \|k_2\|_{L^r}  \|\widehat{|f|^2}\|_{L^{r'}} \nonumber\\
& \lesssim &  \|\hat{f} \ast \hat{\bar{f}}\|_{L^{r'}}\nonumber\\
& \lesssim & \|\hat{f}\|_{L^{r_1}} \|\hat{\bar{f}}\|_{L^{r_1}} \nonumber \\
& \lesssim &  \|f\|^2_{M^{\min\{r_1', 2\}, r_1}}  \lesssim \|f\|^2_{M^{2, r_1}}\nonumber
\end{eqnarray}
Since  $f:[\frac{d}{d-\gamma}, \infty]\to \mathbb R, f(r) = \frac{2r}{2r-1}$ is a decreasing function, by  Lemma \ref{rl}\eqref{ir}, we have
\begin{eqnarray}\label{md3}
\|k_2 \widehat{|f|^2}\|_{L^1}\lesssim  \|f\|^2_{M^{2, r_1}} \lesssim \|f\|^2_{M^{p,q}}. 
\end{eqnarray}
Combining \eqref{md1}, \eqref{md2}, and  \eqref{md3}, we obtain \eqref{d1}.
\end{proof}
\begin{Lemma} [\cite{sdgb}]\label{cl}
Let  $0<\gamma <d,$ and $1\leq p \leq 2, 1\leq q < \frac{2d}{d+\gamma}.$ For any $ f,g \in M^{p,q}(\mathbb R^{d})$, we have
$$\| (K\ast |f|^{2})f - (K\ast |g|^{2})g\|_{M^{p,q}} \lesssim  (\|f\|_{M^{p,q}}^{2}+\|f\|_{M^{p,q}}\|g\|_{M^{p,q}}+ \|g\|_{M^{p,q}}^{2}) \|f-g\|_{M^{p,q}}.$$
\end{Lemma}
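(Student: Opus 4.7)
The plan is to reduce the difference to a sum of trilinear expressions of the form $(K\ast(f_1\bar f_2))f_3$ and then estimate each by the same Fourier-analytic argument used in Lemma 4.2. Writing
\begin{equation*}
(K\ast |f|^2)f-(K\ast |g|^2)g=(K\ast |f|^2)(f-g)+\bigl(K\ast(f\,\overline{f-g})\bigr)g+\bigl(K\ast(\bar g(f-g))\bigr)g,
\end{equation*}
the task is to bound a generic trilinear term $T(u,v,w):=(K\ast(u\bar v))w$ by $\|u\|_{M^{p,q}}\|v\|_{M^{p,q}}\|w\|_{M^{p,q}}$; the stated inequality then follows by applying this to the three pieces and collecting the $\|f-g\|_{M^{p,q}}$ factor.

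To estimate $T(u,v,w)$, I would first invoke the $\mathcal{F}L^1$-module property of $M^{p,q}$ from Theorem~\ref{pl}\eqref{mp} to write
\begin{equation*}
\|T(u,v,w)\|_{M^{p,q}}\lesssim \|K\ast(u\bar v)\|_{\mathcal{F}L^1}\|w\|_{M^{p,q}}=\bigl\|\widehat K\,\widehat{u\bar v}\,\bigr\|_{L^1}\|w\|_{M^{p,q}}.
\end{equation*}
Then I would recycle the decomposition $\widehat K=k_1+k_2$ from \eqref{dc}, with $k_1\in L^1$ and $k_2\in L^r$ for some $r>d/(d-\gamma)$, exactly as in the proof of Lemma~\ref{iml}. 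For the low-frequency piece, H\"older and $M^{p,q}\hookrightarrow L^2$ (Lemma~\ref{rl}\eqref{el} combined with \eqref{ir}, valid because $q\le 2\le p'$ for admissible $p,q$) give
\begin{equation*}
\|k_1\widehat{u\bar v}\|_{L^1}\le\|k_1\|_{L^1}\|u\bar v\|_{L^1}\le\|u\|_{L^2}\|v\|_{L^2}\lesssim \|u\|_{M^{p,q}}\|v\|_{M^{p,q}}.
\end{equation*}

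For the high-frequency piece, by H\"older and Young's convolution inequality with $1/r_1+1/r_1=1/r'+1$, i.e.\ $r_1=2r/(2r-1)\in[1,2]$, followed by Lemma~\ref{rl}\eqref{rcs} and \eqref{ic},
\begin{equation*}
\|k_2\widehat{u\bar v}\|_{L^1}\le\|k_2\|_{L^r}\|\widehat u\ast\widehat{\bar v}\|_{L^{r'}}\lesssim \|\widehat u\|_{L^{r_1}}\|\widehat{\bar v}\|_{L^{r_1}}\lesssim \|u\|_{M^{2,r_1}}\|v\|_{M^{2,r_1}}.
\end{equation*}
Since $q<2d/(d+\gamma)$ one may choose $r>d/(d-\gamma)$ close enough to the endpoint so that $r_1\ge q$, and hence Lemma~\ref{rl}\eqref{ir} yields $\|u\|_{M^{2,r_1}}\lesssim\|u\|_{M^{p,q}}$ (and likewise for $v$). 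Combining the two pieces gives $\|T(u,v,w)\|_{M^{p,q}}\lesssim \|u\|_{M^{p,q}}\|v\|_{M^{p,q}}\|w\|_{M^{p,q}}$. Applying this bound to each of the three terms in the telescoping identity and factoring out $\|f-g\|_{M^{p,q}}$ produces exactly the claimed estimate.

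The main obstacle is not the identity or the algebra---those are straightforward---but ensuring that the exponent $r_1$ arising in the Young step satisfies $r_1\ge q$ so that the embedding $M^{2,r_1}\hookrightarrow M^{p,q}$ is available; this is precisely why the restriction $q<2d/(d+\gamma)$ is imposed, and the argument essentially retraces the bookkeeping already done for Lemma~\ref{iml}.
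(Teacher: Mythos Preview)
Your proof is correct and follows essentially the same approach as the paper's. The only cosmetic difference is organizational: you isolate a single trilinear estimate $\|T(u,v,w)\|_{M^{p,q}}\lesssim\|u\|_{M^{p,q}}\|v\|_{M^{p,q}}\|w\|_{M^{p,q}}$ and apply it uniformly to three terms, whereas the paper splits into two terms via \eqref{inpc}, handles $(K\ast|f|^2)(f-g)$ by direct reference to Lemma~\ref{iml}, and then further decomposes $|f|^2-|g|^2=(f-g)\bar f+g(\overline{f-g})$ inside the second term---but the Fourier-side splitting $\widehat K=k_1+k_2$, the H\"older/Young bookkeeping, and the embedding $\mathcal{F}L^{r_1}\hookleftarrow M^{2,r_1}\hookleftarrow M^{p,q}$ are identical in both arguments.
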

\begin{proof}
By exploiting the ideas from the proof of Lemma \ref{iml}, we obtain
\begin{eqnarray}
\|(K\ast |f|^{2})(f-g)\|_{M^{p,q}} & \lesssim & \|K\ast |f|^{2}\|_{\mathcal{F}L^{1}} \|f-g\|_{M^{p,q}}\nonumber \\
& \lesssim &  \left( \|k_{1} \widehat{|f|^{2}}\|_{L^{1}} + \|k_{2} \widehat{|f|^{2}}\|_{L^{1}}  \right) \|f-g\|_{M^{p,p}}\nonumber \\
& \lesssim &  \|f\|^{2}_{M^{p,q}} \|f-g\|_{M^{p,q}}.\label{a1}
\end{eqnarray}
Let $1\leq s < \frac{d}{d-\gamma}<t\leq 2, \frac{1}{s}+\frac{1}{s'}=1, \frac{1}{t}+\frac{1}{t'}=1.$
We note that
\begin{eqnarray}
\|(K \ast (|f|^{2}- |g|^{2}))g\|_{M^{p,q}}  & \lesssim & \|K \ast (|f|^{2}-|g|^{2})\|_{\mathcal{F}L^{1}} \|g\|_{M^{p,q}}\nonumber \\
& \lesssim & \left( \|k_1\|_{L^s}   \| \widehat{ |f|^{2}- |g|^{2}}\|_{L^{s'}} + \|k_2\|_{L^t} \| \widehat{|f|^{2}- |g|^{2}}\|_{L^{t'}} \right) \nonumber \\
&&  \|g\|_{M^{p,p}}\nonumber \\ 
& \lesssim &  \left( \| \widehat{ |f|^{2}- |g|^{2}}\|_{L^{s'}} + \| \widehat{|f|^{2}- |g|^{2}}\|_{L^{t'}} \right)  \|g\|_{M^{p,q}}\label{a2}
\end{eqnarray}
Let $1\leq r \leq 2,$ and  $\frac{1}{r}+ \frac{1}{r'}=1.$  Note that $\frac{1}{r'}+1= \frac{1}{r_1}+ \frac{1}{r_2},$ where $r_1=r_2:= \frac{2r}{2r-1} \in [1,2]$, and $r_1' \in [2, \infty]$ where $\frac{1}{r_1}+ \frac{1}{r_1^{'}}=1.$   Now  using  Young's inequality for convolution,  and  exploiting ideas performed as in the proof of Lemma \ref{iml}, we obtain 
\begin{eqnarray*}
\| |f|^{2}- |g|^{2}\|_{ \mathcal{F}L^{r'}} & \lesssim  & \|(f-g)\bar{f} \|_{\mathcal{F}L^{r'}} + \| g(\bar{f}-\bar{g}\|_{\mathcal{F}L^{r'}}\\
& = & \|\widehat{(f-g)} \ast \hat{\bar{f}}\|_{L^{r'}} + \|\hat{g} \ast  \widehat{\overline{f-g}}\|_{L^{r'}}\\
& \lesssim &  \|f-g\|_{\mathcal{F}L^{r_1}}\|\bar{f}\|_{\mathcal{F}L^{r_1}} +  \|g\|_{\mathcal{F}L^{r_1}} \|\overline{f-g}\|_{\mathcal{F}L^{r_1}}\\
& \lesssim &  (\|f\|_{M^{2, r_1}} + \|g\|_{M^{2, r_1}})  \|f-g\|_{M^{2, r_1}}\\
& \lesssim & (\|f\|_{M^{p,q}} + \|g\|_{M^{p,q}})  \|f-g\|_{M^{p,q}}.
\end{eqnarray*}
Using this,  \eqref{a2} gives
\begin{eqnarray}
\|(K \ast (|f|^{2}- |g|^{2}))g\|_{M^{p,q}} &  \lesssim & \left( \|f\|_{M^{p,q}} + \|g\|_{M^{p,q}} \right)  \|g\|_{M^{p,q}}\nonumber \\
&&\|f-g\|_{M^{p,q}}\label{al}
\end{eqnarray}
Now taking the identity
\begin{eqnarray}\label{inpc}
(K\ast |f|^{2})f- (K\ast |g|^{2})g= (K\ast |f|^{2})(f-g) + (K \ast (|f|^{2}- |g|^{2}))g
\end{eqnarray}
into our  account, \eqref{a1} and \eqref{al}  gives the desired inequality.
\end{proof}

\begin{proof}[Proof of Theorem \ref{mt}]  
By Duhamel's formula, we note that \eqref{HTEH} can be written in the equivalent form
\begin{equation}\label{df1}
u(\cdot, t)= U(t)u_{0}-i\mathcal{A} (K\ast |u|^2)u
\end{equation}
where
\begin{equation*}
 U(t)=e^{itH} \ \text{and} \  (\mathcal{A}v)(t,x)=\int_{0}^{t}U(t- \tau)\, v(t,x) \, d\tau.
\end{equation*}
We first  prove the local existence on $[0,T)$ for some $T>0.$  
We consider now the mapping
\begin{equation*}
\mathcal{J}(u)= U(t)u_{0}-i\int_{0}^{t}U(t-\tau) \, [(K\ast |u|^{2}(\tau))u(\tau)] \, d\tau.
\end{equation*}
By Minkowski's inequality for integrals,  Theorem \ref{mso}, and  Lemma \ref{iml}, we obtain
\begin{eqnarray*}
\left\| \int_{0}^{t} U(t-\tau) [(K\ast |u|^{2}(\tau)) u(\tau)]  \, d\tau \right\|_{M^{p,p}} 
   &\leq & T  \, \|(K\ast |u|^{2}(t)) u(t)\|_{M^{p,p}} \nonumber \\
   & \leq & T\|u(t)\|_{M^{p,p}}^{3}.
\end{eqnarray*}
By  Theorem \ref{mso}, and using above inequality, we have
\begin{eqnarray*}
\|\mathcal{J}u\|_{C([0, T], M^{p,p})} \leq   \|u_{0}\|_{M^{p,p}} + c T \|u\|_{M^{p,p}}^{3},
\end{eqnarray*}
for some universal constant $c.$
 For $M>0$, put  $$B_{T, M}= \{u\in C([0, T], M^{p,p}(\mathbb R^{d})):\|u\|_{C([0, T], M^{p,p})}\leq M \},$$  which is the  closed ball  of radius $M$, and centered at the origin in  $C([0, T], M^{p,q}(\mathbb R^{d}))$.  
Next, we show that the mapping $\mathcal{J}$ takes $B_{T, M}$ into itself for suitable choice of  $M$ and small $T>0$. Indeed, if we let, $M= 2\|u_{0}\|_{M^{p,p}}$ and $u\in B_{T, M},$ it follows that 
\begin{eqnarray*}
\|\mathcal{J}u\|_{C([0, T], M^{p,p})} \leq  \frac{M}{2} + cT M^{3}.
\end{eqnarray*}
We choose a  $T$  such that  $c TM^{2} \leq 1/2,$ that is, $T \leq \tilde{T}(\|u_0\|_{M^{p,q}}, d, \gamma)$ and as a consequence  we have
\begin{eqnarray*}
\|\mathcal{J}u\|_{C([0, T], M^{p,p})} \leq \frac{M}{2} + \frac{M}{2}=M,
\end{eqnarray*}
that is, $\mathcal{J}u \in B_{T, M}.$
By Lemma \ref{cl}, and the arguments as before, we obtain
\begin{eqnarray*}
\|\mathcal{J}u- \mathcal{J}v\|_{C([0, T], M^{p,p})} \leq \frac{1}{2} \|u-v\|_{C([0, T], M^{p,p})}.
\end{eqnarray*}
Therefore, using the  Banach's contraction mapping principle, we conclude that $\mathcal{J}$ has a fixed point in $B_{T, M}$ which is a solution of \eqref{df1}. 

Taking  Proposition \ref{miD} into account, 
to prove Theorem \ref{mt}, it suffices to prove  that  the modulation space  norm  $\|u\|_{M^{p,p}}$ cannot become unbounded in finite time.
In view of \eqref{dc} and to use the Hausdorff-Young inequality we let $1< \frac{d}{d-\gamma} <q \leq 2,$ 
and we obtain
\begin{eqnarray}
\|u(t)\|_{M^{p,p}} & \lesssim &   \|u_{0}\|_{M^{p,p}} + \int_{0}^{t} \|(K\ast |u(\tau)|^{2}) u(\tau)\|_{M^{p,p}}d\tau \nonumber \\
& \lesssim &    \|u_{0}\|_ {M^{p,p}} + \int_{0}^{t} \|K\ast |u(\tau)|^{2}\|_{\mathcal{F}L^{1}} \|u(\tau)\|_{M^{p,p}} d\tau  \nonumber\\
& \lesssim &   \|u_{0}\|_{M^{p,p}} + \int_{0}^{t} \left( \|k_{1}\|_{L^{1}} \|u(\tau)\|_{L^{2}}^{2}+ \|k_{2}\|_{L^{q}} \|\widehat{|u(\tau)|^{2}}\|_{L^{q'}}
\right) \nonumber \\
&&\|u(\tau)\|_{M^{p,p}} d\tau \nonumber\\
& \lesssim & \|u_{0}\|_{M^{p,p}} +  \int_{0}^{t} \left(  \|u_{0}\|_{L^{2}}^{2}+ \||u(\tau) |^{2}\|_{L^{q}}\right)\|u(\tau)\|_{M^{p,p}} d\tau\nonumber\\
& \lesssim & \|u_{0}\|_{M^{p,p}}+  \int_{0}^{t}\|u(\tau)\|_{M^{p,p}} d\tau  +  \int_{0}^{t} \|u(\tau)\|_{L^{2q}}^{2} \|u(\tau)\|_{M^{p,p}} d\tau,\nonumber
 \end{eqnarray}
 where we have used  Theorem \ref{pl},  H\"older's inequality, and  the conservation of the $L^{2}-$norm of $u$. \\
We note that the requirement on $q$ can be fulfilled if and only if $0<\gamma <d/2.$ To apply Proposition \ref{mi}, we let $\beta>1$ and $(2\beta, 2 q)$ is  admissible, that is, $\frac{2}{2\beta}= d \left(\frac{1}{2}- \frac{1}{2q} \right)$ such that $\frac{1}{\beta}= \frac{d}{2} \left( 1 - \frac{1}{q} \right)<1.$ This is possible provided $\frac{q-1}{q} < \frac{2}{d}:$ this condition is compatible with the requirement $q> \frac{d}{d-\gamma}$ if and only if $\gamma < 2.$
 Using the H\"older's inequality for the last integral, we obtain
\begin{eqnarray*}
\|u(t)\|_{M^{p,p}} &  \lesssim  & \|u_0\|_{M^{p,p}} + \int_{0}^{t} \|u(\tau)\|_{M^{p,p}} d\tau \\
&& + \|u\|_{L^{2\beta}([0, T], L^{2q})}^{2}\|u\|_{L^{\beta'}[0, T], M^{p,p})},
\end{eqnarray*}
where $\beta'$ is the H\"older conjugate exponent of $\beta.$
Put
$$h(t):=\sup_{0 \leq \tau \leq t} \|u(\tau)\|_{M^{p,p}}.$$
For a given $T>0,$ $h$ satisfies an estimate of the form
$$h(t)\lesssim \|u_{0}\|_{M^{p,p}}+ \int_{0}^{t} h(\tau) d\tau + C_0(T) \left( \int_{0}^{t}h(\tau)^{\beta'} d\tau \right)^{\frac{1}{\beta'}},$$
provided that $0 \leq t \leq T,$ and where we have used the fact that $\beta'$ is finite.
Using the H\"older's inequality we infer that
$$h(t)\lesssim   \|u_{0}\|_{M^{p,p}} + C_{1}(T) \left(\int_{0}^{t} h(\tau)^{\beta'}d \tau \right)^{\frac{1}{\beta'}}.$$
Raising the above estimate to the power $\beta'$, we find that
$$h(t)^{\beta'} \lesssim  C_{2}(T) \left( 1+\int_{0}^{t} h(\tau)^{\beta'} d\tau\right).$$ 
In view of  Gronwall inequality, one may conclude that  $h\in L^{\infty}([0, T]).$ Since $T>0$ is arbitrary, $h\in L^{\infty}_{loc}(\mathbb R),$ and  the proof of Theorem \ref{mt}  follows.
\end{proof}
\subsection{Global well-posedness in $M^{p,p}$ for the potential in $M^{\infty,1}(\R^d)$.}\label{pmtv}
In this section we prove Theorem \ref{mtv}.
\begin{Lemma}\label{lcg} Let  $K\in M^{\infty,1}(\mathbb R^d),$  and $1\leq p, q \leq 2.$ For $f \in M^{p,q}(\R^d),$ we have $$ \|(K\ast |f|^{2}) f\|_{M^{p,q}} \lesssim\|f\|_{M^{p,q}}^{3} ,$$and
$$\| (K\ast |f|^{2})f - (K\ast |g|^{2})g\|_{M^{p,q}} \lesssim  (\|f\|_{M^{p,q}}^{2}+\|f\|_{M^{p,q}}\|g\|_{M^{p,q}}+ \|g\|_{M^{p,q}}^{2}) \|f-g\|_{M^{p,q}}.$$
\end{Lemma}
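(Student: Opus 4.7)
The plan is to reduce the cubic estimate to a convolution bound in Sj\"ostrand's class $M^{\infty,1}$ by exploiting that $M^{\infty,1}$ acts as a pointwise multiplier on every $M^{p,q}$, and then to use the fact that $|f|^{2}\in L^{1}\hookrightarrow M^{1,\infty}$ whenever $f\in M^{p,q}$ with $p,q\leq 2$.

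First I apply Theorem~\ref{pl}\eqref{mp} with the pair $(p_{1},q_{1})=(\infty,1)$ and $(p_{2},q_{2})=(p_{0},q_{0})=(p,q)$, the index conditions $\frac{1}{\infty}+\frac{1}{p}=\frac{1}{p}$ and $\frac{1}{1}+\frac{1}{q}=1+\frac{1}{q}$ being trivially satisfied, to obtain the multiplier estimate
\[
\|(K\ast |f|^{2})\,f\|_{M^{p,q}}\lesssim \|K\ast |f|^{2}\|_{M^{\infty,1}}\,\|f\|_{M^{p,q}}.
\]
Next I invoke the convolution part of Theorem~\ref{pl} with $(p_{1},q_{1})=(\infty,1)$ for $K$ and $(p_{2},q_{2})=(1,\infty)$ for $|f|^{2}$, noting $\frac{1}{\infty}+\frac{1}{1}=1+\frac{1}{\infty}$ and $\frac{1}{1}+\frac{1}{\infty}=1$, which gives $M^{\infty,1}\ast M^{1,\infty}\hookrightarrow M^{\infty,1}$, hence
\[
\|K\ast |f|^{2}\|_{M^{\infty,1}}\lesssim \|K\|_{M^{\infty,1}}\,\||f|^{2}\|_{M^{1,\infty}}.
\]

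To control $\||f|^{2}\|_{M^{1,\infty}}$ I pass through $L^{2}$: Lemma~\ref{rl}\eqref{ir} gives $M^{p,q}\hookrightarrow M^{2,2}$, Lemma~\ref{rl}\eqref{el} gives $M^{2,2}\hookrightarrow L^{2}$, and the embedding $L^{1}\hookrightarrow M^{1,\infty}$ recalled by the author just before Definition~\ref{ms} produces
\[
\||f|^{2}\|_{M^{1,\infty}}\lesssim \||f|^{2}\|_{L^{1}}=\|f\|_{L^{2}}^{2}\lesssim \|f\|_{M^{p,q}}^{2}.
\]
Chaining the three displays yields the first bound.

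For the difference estimate I use, exactly as in Lemma~\ref{cl}, the algebraic identity
\[
(K\ast |f|^{2})f-(K\ast |g|^{2})g=(K\ast |f|^{2})(f-g)+(K\ast(|f|^{2}-|g|^{2}))g
\]
together with $|f|^{2}-|g|^{2}=(f-g)\bar{f}+g(\bar{f}-\bar{g})$. Running the same three-step chain on the first piece with $f-g$ in the multiplier slot bounds it by $\|K\|_{M^{\infty,1}}\|f\|_{M^{p,q}}^{2}\|f-g\|_{M^{p,q}}$. For the second piece I again use the module and convolution estimates, replacing the final $L^{1}$ step by the Cauchy-Schwarz consequence
\[
\||f|^{2}-|g|^{2}\|_{L^{1}}\leq (\|f\|_{L^{2}}+\|g\|_{L^{2}})\,\|f-g\|_{L^{2}}\lesssim (\|f\|_{M^{p,q}}+\|g\|_{M^{p,q}})\,\|f-g\|_{M^{p,q}},
\]
and summing the two contributions produces the claimed cubic bound.

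The only delicate point I would verify is the applicability of Theorem~\ref{pl}(2) in the endpoint configuration $M^{\infty,1}\ast M^{1,\infty}\hookrightarrow M^{\infty,1}$; everything else is a routine chain of embeddings and H\"older-type inequalities. Notice that the hypothesis $p,q\leq 2$ enters precisely to allow placing $|f|^{2}$ into $L^{1}$ via the $L^{2}$ control of $f$, and outside this range one would have to place $|f|^{2}$ in a different modulation space, at the cost of losing the pairing with $M^{\infty,1}$ in the convolution step.
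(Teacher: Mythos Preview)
Your argument is correct and is essentially the paper's own proof written out in full detail: the paper also factors through $\|K\ast|f|^{2}\|_{M^{\infty,1}}$ via Theorem~\ref{pl}(1), then applies the convolution estimate $M^{\infty,1}\ast M^{1,\infty}\hookrightarrow M^{\infty,1}$ from Theorem~\ref{pl}(2) together with $L^{1}\hookrightarrow M^{1,\infty}$ and $M^{p,q}\hookrightarrow L^{2}$, and for the difference it simply points to the identity~\eqref{inpc} and the same chain. Your flagged ``delicate point'' about the endpoint convolution $M^{\infty,1}\ast M^{1,\infty}\hookrightarrow M^{\infty,1}$ is exactly the step the paper invokes, so no alternative route is needed.
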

\begin{proof}
We note that $L^1\subset M^{1, \infty}$ (see Lemma \ref{rl} \eqref{el}) and using  Theorem \ref{pl}, we have
 \[ \|(K\ast |f|^{2}) f\|_{M^{p,q}} \lesssim  \| K\ast |f|^2\|_{M^{\infty,1}} \|f\|_{M^{p,q}} \lesssim   \||f|^2\|_{M^{1, \infty}} \|f\|_{M^{p,q}} \lesssim   \|f\|_{L^2}^2 \|f\|_{M^{p,q}} \lesssim \|f\|_{M^{p,q}}^3.\] This proves the first inequality.  In view of \eqref{inpc}, Theorem \ref{pl}, and exploiting ideas from the first inequality gives the second inequality. 
\end{proof}
\begin{proof}[Proof of Theorem \ref{mtv}]   We note that \eqref{HTEH} can be written in the equivalent form
\begin{equation}\label{df11}
u(\cdot, t)= U(t)u_{0}-i\mathcal{A} (K\ast |u|^2)u
\end{equation}
where
\begin{equation*}
  U(t)=e^{itH} \ \text{and}  \  (\mathcal{A}v)(t,x)=\int_{0}^{t}U(t- \tau)\, v(t,x) \, d\tau.
\end{equation*}
We first  prove the local existence on $[0,T)$ for some $T>0.$  
We consider now the mapping
\begin{equation*}
\mathcal{J}(u)= U(t)u_{0}-i\int_{0}^{t}U(t-\tau) \, [(K\ast |u|^{2}(\tau))u(\tau)] \, d\tau.
\end{equation*}
\noindent
By Minkowski's inequality for integrals,  Theorem \ref{mso}, and  Lemma \ref{lcg}, we obtain
\begin{eqnarray*}
\left\| \int_{0}^{t} U(t-\tau) [(K\ast |u|^{2}(\tau)) u(\tau)]  \, d\tau \right\|_{M^{p,p}} 
   & \leq &  cT \|u(t)\|_{M^{p,p}}^{3},
\end{eqnarray*}
 for some universal constant $c.$
By Theorem \ref{mso} and the above inequality, we have
\begin{eqnarray*}
\|\mathcal{J}u\|_{C([0, T], M^{p,p})} \leq  \|u_{0}\|_{M^{p,p}} + cT  \|u\|_{M^{p,p}}^{3}.
\end{eqnarray*}
For $M>0$, put  $$B_{T, M}= \{u\in C([0, T], M^{p,p}(\mathbb R^{d})):\|u\|_{C([0, T], M^{p,p})}\leq M \},$$  which is the  closed ball  of radius $M$, and centered at the origin in  $C([0, T], M^{p,q}(\mathbb R^{d}))$.  
Next, we show that the mapping $\mathcal{J}$ takes $B_{T, M}$ into itself for suitable choice of  $M$ and small $T>0$. Indeed, if we let, $M= 2\|u_{0}\|_{M^{p,p}}$ and $u\in B_{T, M},$ it follows that 
\begin{eqnarray*}
\|\mathcal{J}u\|_{C([0, T], M^{p,p})} \leq  \frac{M}{2} + cT M^{3}.
\end{eqnarray*}
We choose a  $T$  such that  $c TM^{2} \leq 1/2,$ that is, $T \leq \tilde{T}(\|u_0\|_{M^{p,p}})$ and as a consequence  we have
\begin{eqnarray*}
\|\mathcal{J}u\|_{C([0, T], M^{p,p})} \leq \frac{M}{2} + \frac{M}{2}=M,
\end{eqnarray*}
that is, $\mathcal{J}u \in B_{T, M}.$
By Lemma \ref{lcg}, and the arguments as before, we obtain
\begin{eqnarray*}
\|\mathcal{J}u- \mathcal{J}v\|_{C([0, T], M^{p,p})} \leq \frac{1}{2} \|u-v\|_{C([0, T], M^{p,p})}.
\end{eqnarray*}
Therefore, using Banach's contraction mapping principle, we conclude that $\mathcal{J}$ has a fixed point in $B_{T, M}$ which is a solution of \eqref{HTEH}. 

Indeed, the solution constructed before is global in time: in view of the conservation of $L^{2}$ norm, Theorem \ref{pl}, and Lemma \ref{rl}, we have
\begin{eqnarray*}
\|u((t)\|_{M^{p,p}} & \lesssim  &   \|u_{0} \|_{M^{p,p}} + \int_{0}^{t} \|K\ast |u(\tau)|^{2}\|_{M^{\infty,1}} \|u(\tau)\|_{M^{p,p}} d\tau \nonumber \\
& \lesssim &   \|u_{0}\|_{M^{p,p}} + \int_{0}^{t} \|K\|_{M^{\infty,1}} \||u(t)|^{2}\|_{M^{1, \infty}} \|u(\tau)\|_{M^{p,p}} d\tau \nonumber \\
& \lesssim &  \|u_{0}\|_{M^{p,p}} + \int_{0}^{t}  \||u(t)|^{2}\|_{L^1} \|u(\tau)\|_{M^{p,p}} d\tau \nonumber \\
& \lesssim &   \|u_{0}\|_{M^{p,p}} +  \|u_{0}\|_{L^{2}}^{2} \int_{0}^{t}\|u(\tau)\|_{M^{p,p}} d\tau,
\end{eqnarray*}
and by Gronwall inequality, we conclude that $\|u(t)\|_{M^{p,q}}$ remains bounded on finite time intervals. This completes the proof.
\end{proof}
\subsection{Local well-posedness in $M^{1,1}$ for power type non linearity}.\label{phu} In this subsection we prove Theorem \ref{mtp}.  We start by recalling following  
\begin{Definition}
\label{red}
A complex valued function $F$ defined on  the plane $\mathbb R^{2}$ is said to be real entire, if $F$ has the power series expansion 
\bea\label{pwrexp} F(s, t)= \sum_{m,n=0}^{\infty} a_{mn}\, s^{m} \, t^{n}\eea
that converges absolutely for every $(s,t) \in \R^2. $
\end{Definition}
\noindent
 \textbf{Notations}: \begin{enumerate}
\item For $u: \mathbb R^d \to \C,$ we put $u=u_1+iu_2,$ where $u_1, u_2:\mathbb R^d \to \R$, and write 
\begin{eqnarray*} 
F(u)= F(u_{1}, u_{2}),
\end{eqnarray*}
where  $F:\mathbb R^{2}\to \mathbb C$ is real entire on $\mathbb R^{2}$ with $F(0)=0.$ 
\item  If $F$ is real entire function given by \eqref{pwrexp}, then we denote by  $\tilde{F}$ the function given by the power series expansion
\begin{eqnarray*}
\tilde{F}(s,t) = \sum_{m,n=0}^{\infty} |a_{mn}|\, s^{m} \, t^{n}.
\end{eqnarray*}
Note that $\tilde{F}$ is real entire if $F$ is real entire. Moreover, as a function on $[0,\infty) \times [0,\infty)$, it is monotonically increasing  with respect to each of the variables $s$ and $t$.
\end{enumerate} 
\begin{Proposition}[\cite{dgb-pkr}] \label{Fu-Fv}
Let $F$ be a real entire, $F(0)=0$ and $1\leq p \leq \infty.$ Then
\begin{enumerate}

\item  $\|F(u)\|_{M^{p,1}} \lesssim \tilde{F} (\|u\|_{M^{p,1}}, \|u\|_{M^{p,1}}).$

 \item  For $u, v \in M^{p,1}(\mathbb R^d),$ we have 
\Bea \|F(u_1,u_2) \!\!\!\! &-&\!\!\!\! F(v_1,v_2)\|_{M^{p,1}} \\
&& \lesssim 2 \|u-v\|_{M^{p,1}}   \left[ 
  \left(\widetilde{\partial_x F} +\widetilde{\partial_y F} \right) (\|u\|_{M^{p,1}}+\|v\|_{M^{p,1}}, \|u\|_{M^{p,1}}+\|v\|_{M^{p,1}}) \right].\Eea 
\end{enumerate}
\end{Proposition}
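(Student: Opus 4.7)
The plan is to exploit the fact that $M^{p,1}(\mathbb R^d)$ is a Banach algebra under pointwise multiplication for every $1\le p\le\infty$, and then feed the (absolutely convergent) power series expansion of $F$ term-by-term into this algebra estimate. The algebra property follows by combining Theorem \ref{pl}\eqref{mp} applied with $p_1=\infty$, $p_2=p$, $q_1=q_2=1$ (which gives $M^{\infty,1}\cdot M^{p,1}\hookrightarrow M^{p,1}$) together with the embedding $M^{p,1}\hookrightarrow M^{\infty,1}$ from Lemma \ref{rl}\eqref{ir}. This yields a constant $C_p>0$ with
$$\|fg\|_{M^{p,1}}\le C_p\|f\|_{M^{p,1}}\|g\|_{M^{p,1}}.$$
Renormalising by $|||f|||:=C_p\|f\|_{M^{p,1}}$ makes $(M^{p,1},|||\cdot|||)$ a genuine submultiplicative Banach algebra. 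Lemma \ref{rl}\eqref{ic} ensures that the real and imaginary parts $u_1=(u+\bar u)/2$ and $u_2=(u-\bar u)/(2i)$ lie in $M^{p,1}$ with $\|u_j\|_{M^{p,1}}\le\|u\|_{M^{p,1}}$.

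For the first estimate I write $F(s,t)=\sum_{m+n\ge 1}a_{mn}s^mt^n$ (the sum starts at $m+n\ge 1$ because $F(0)=0$), apply the triangle inequality in $M^{p,1}$, and iterate submultiplicativity to get $\|u_1^mu_2^n\|_{M^{p,1}}\le C_p^{m+n-1}\|u_1\|_{M^{p,1}}^m\|u_2\|_{M^{p,1}}^n$. Summing,
$$\|F(u_1,u_2)\|_{M^{p,1}}\le C_p^{-1}\sum_{m+n\ge 1}|a_{mn}|\bigl(C_p\|u_1\|_{M^{p,1}}\bigr)^m\bigl(C_p\|u_2\|_{M^{p,1}}\bigr)^n=C_p^{-1}\tilde F\bigl(C_p\|u_1\|_{M^{p,1}},C_p\|u_2\|_{M^{p,1}}\bigr),$$
which converges because $\tilde F$ is entire by Definition \ref{red}. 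Monotonicity of $\tilde F$ on the nonnegative quadrant together with $\|u_j\|_{M^{p,1}}\le\|u\|_{M^{p,1}}$ and absorption of $C_p$ into the $\lesssim$ delivers the first inequality.

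For the Lipschitz estimate I apply the fundamental theorem of calculus along the straight segment from $(v_1,v_2)$ to $(u_1,u_2)$:
$$F(u_1,u_2)-F(v_1,v_2)=\int_{0}^{1}\!\bigl[(u_1-v_1)(\partial_x F)+(u_2-v_2)(\partial_y F)\bigr]\!\bigl(v+\tau(u-v)\bigr)\,d\tau.$$
Taking $M^{p,1}$-norms, invoking Minkowski's integral inequality, the algebra property, and the first estimate applied to the real entire functions $\partial_x F$ and $\partial_y F$, the integrand is bounded by
$$\bigl(\|u_1-v_1\|_{M^{p,1}}+\|u_2-v_2\|_{M^{p,1}}\bigr)\bigl(\widetilde{\partial_x F}+\widetilde{\partial_y F}\bigr)\bigl(\|v+\tau(u-v)\|_{M^{p,1}},\|v+\tau(u-v)\|_{M^{p,1}}\bigr).$$
Because $\|v+\tau(u-v)\|_{M^{p,1}}\le\|u\|_{M^{p,1}}+\|v\|_{M^{p,1}}$ and $\widetilde{\partial_x F}+\widetilde{\partial_y F}$ is monotonically increasing in each variable, the $\tau$-integrand is dominated by a $\tau$-independent quantity; integration in $\tau$ is then trivial, $\|u_j-v_j\|_{M^{p,1}}\le\|u-v\|_{M^{p,1}}$, and we gather the claimed factor of $2$.

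The principal obstacle is the bookkeeping of the algebra constant across the Taylor expansion: a naive iteration would leave an uncontrolled factor $C_p^{m+n}$ in front of each coefficient $a_{mn}$, and this need not be a value of any entire majorant. The rescaling to $|||\cdot|||$ (equivalently, the appearance of $\tilde F(C_p\|\cdot\|,C_p\|\cdot\|)$) is precisely what packages those constants into the arguments of $\tilde F$ and preserves the real-entire form of the right-hand side. A secondary technical point is the justification of the integral identity in the $M^{p,1}$ distributional sense; I would verify it first for Schwartz data using Lemma \ref{rl}\eqref{d} (density when $p<\infty$) and then pass to the limit using the continuity of the nonlinearity established by the first estimate, handling the case $p=\infty$ directly from the STFT representation.
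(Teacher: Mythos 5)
The paper does not prove this proposition at all; it is imported from \cite{dgb-pkr}, where the argument runs the power series of $F$ through the Banach-algebra property of $M^{p,1}$ and treats the difference $F(u)-F(v)$ by telescoping the monomials $u_1^mu_2^n-v_1^mv_2^n$. Your route is the same for part (1), but for part (2) you replace the telescoping by the fundamental theorem of calculus along the segment from $(v_1,v_2)$ to $(u_1,u_2)$; this is a legitimate and arguably cleaner alternative, at the modest price of having to justify the Bochner-integral identity in $M^{p,1}$, which your density argument handles. Your identification of the algebra constant as the main bookkeeping issue is exactly right, but your resolution of it is slightly overstated: what the argument actually yields is $\|F(u)\|_{M^{p,1}}\le C_p^{-1}\tilde F\bigl(C_p\|u\|_{M^{p,1}},C_p\|u\|_{M^{p,1}}\bigr)$, and since $\tilde F(C_px,C_px)$ is \emph{not} in general $\lesssim\tilde F(x,x)$ (take $\tilde F(s,t)=e^{s}-1$ with $C_p>1$), the constant cannot be ``absorbed into the $\lesssim$'' outside of $\tilde F$; it must remain inside the arguments, i.e.\ one should either state the result for your renormalised submultiplicative norm $|||\cdot|||$ or write the constants inside $\tilde F$ as \cite{dgb-pkr} does. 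This discrepancy is harmless for the application in Theorem \ref{mtp} but should be acknowledged rather than hidden in the notation.

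There is one genuine gap to repair in part (2): you apply ``the first estimate'' to $\partial_xF$ and $\partial_yF$, but the first estimate assumes $F(0)=0$, and when $\partial_xF(0,0)\neq0$ the composition $\partial_xF(w_1,w_2)$ need not lie in $M^{p,1}$ for $p<\infty$ at all, because the constant term of the series contributes a multiple of the constant function $1\notin M^{p,1}$. The fix is to measure $\partial_xF(w)$ in $M^{\infty,1}$ instead, where $\|1\|_{M^{\infty,1}}<\infty$ and the series estimate gives $\|\partial_xF(w)\|_{M^{\infty,1}}\lesssim\widetilde{\partial_xF}\bigl(C\|w\|_{M^{\infty,1}},C\|w\|_{M^{\infty,1}}\bigr)$ with no vanishing assumption, and then to multiply by $u_j-v_j\in M^{p,1}$ using the module estimate $M^{\infty,1}\cdot M^{p,1}\hookrightarrow M^{p,1}$ from Theorem \ref{pl}, together with $\|w\|_{M^{\infty,1}}\lesssim\|w\|_{M^{p,1}}$ from Lemma \ref{rl}. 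With that one-line repair, and the constant kept inside $\tilde F$, your argument is complete.
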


\begin{proof}[Proof of Theorem \ref{mtp}]

Equation  \eqref{HTEH} can be written in the equivalent form
\begin{equation}
u(\cdot, t)= U(t)u_{0}-i\mathcal{A} (K\ast |u|^2)u
\end{equation}
where
\begin{equation*}
  U(t)=e^{itH} \ \text{and}  \  (\mathcal{A}v)(t,x)=\int_{0}^{t}U(t- \tau)\, v(t,x) \, d\tau.
\end{equation*}
 We show that  the mapping 
 \begin{equation*} \label{inteq}
\mathcal{J}(u)= U(t)u_{0}-i\int_{0}^{t}U(t-\tau) \, [F(u(\cdot, \tau)) ] \, d\tau
\end{equation*}
has a unique fixed point in an appropriate functions space, for small $t$. 
For this, we consider the Banach space $X_{T}=C([0, T], M^{1,1}(\R))$, with norm
$$\left\|u\right\|_{X_{T}}=\sup_{t\in [0, T]}\left\|u(\cdot, t)\right\|_{M^{1,1}}, \ (u\in X_{T}).$$ 
Note that if $u \in X_T$, then $u(\cdot, t) \in M^{1,1}(\R^d)$ for each $ t \in[0,  T]$. Now Proposition \ref{Fu-Fv} gives $F(u (\cdot,t)) \in M^{1,1}(\R^d)$ and we have 
\bea  \label{xxtest} \| F(u (\cdot,t)) \|_{M^{1,1}}  &\leq&  \tilde{F}(\|u(\cdot,t)\|_{M^{1,1}} ,\|u(\cdot,t)\|_X )\\ \nonumber
&\leq &  \tilde{F}(\|u\|_{X_T} ,\|u\|_{X_T} ),\eea where the last inequality follows from 
the fact that $\tilde{F}$, is monotonically increasing on $[0,\infty) \times [0,\infty)$ with respect to each of its 
variables. Now an application of Minkowski's inequality for integrals  and the estimate \eqref{xxtest} and Theorem \ref{mso}, yields 
\begin{eqnarray}
\label{ed}
\left\| \int_{0}^{t} U(t-\tau) [F(u(\cdot, \tau))]  \, d\tau \right\|_{M^{1,1}} 
& \leq & \int_{0}^{t} \left\|U(t-\tau) [F(u(\cdot, \tau))] \right\|_{M^{1,1}}  \, d\tau \nonumber\\
   &\leq & T  \, \tilde{F}(\|u\|_{X_T} ,\|u\|_{X_T} )
   \end{eqnarray}
for $0\leq t \leq T$.  Using above  estimates, we see that 
\bea
\label{ee}
\left\|\mathcal{J}(u)\right\|_{X_{T}}&\leq&  \left\|u_{0}\right\|_{M^{1,1}} + T  \tilde{F}(\|u\|_{X_T}, \|u\|_{X_T}) \nonumber \\ 
& \leq &   \left\|u_{0}\right\|_{M^{1,1}} + T \, \|u\|_{X_T} G(\|u\|_{X_T})
\eea
where $G$ is a real analytic function on $[0,\infty)$ such that $\tilde{F}(x,x)= x\, G(x)$.
This factorisation follows  from the fact that the constant term in the power series expansion 
for $\tilde{F}$  is zero, (i.e., $\tilde{F}(0,0)=0)$. We also note that $G$ is increasing on $[0,\infty)$.

 For $M>0$, put  $X_{T, M}= \{u\in X_{T}:\|u\|_{X_{T}}\leq M \}$,  which is the  closed ball  of radius $M$, and centered at the origin in  $X_{T}$.  We claim that 
$$\mathcal{J}:X_{T, M}\to X_{T, M},$$
for suitable choice of  $M$ and small $T>0$. 
Let $C_1\geq 1,$  and putting $M=2 C_{1}\left\|u_{0}\right\|_{M^{1,1}}$, from \eqref{ee} we see that for $u\in X_{T, M}$ and $T\leq 1$
\begin{eqnarray} \label{mapto} 
\left\|\mathcal{J}(u)\right\|_{X_{T}} & \leq & \frac{M}{2}+ T C_1 \,  MG(M) \leq M
\end{eqnarray} 
for  $ T \leq T_1$, where 
\begin{eqnarray} \label{T1} 
T_1= \min \left\{ 1, \frac{1}{2C_1G(M)} \right\}.
\end{eqnarray}
Thus  $\mathcal{J}:X_{T, M}\to X_{T, M},$ for $M=2 C_{1}\left\|u_{0}\right\|_{M^{1,1}}$, and all  $T\leq  T_1$, hence the claim.
  
Now we show that ${\mathcal J}$ satisfies the  contraction estimate 
\begin{eqnarray}\label{pki}
\|\mathcal{J}(u)- \mathcal{J}(v)\|_{X_{T}} \leq \frac{1}{2} \|u-v\|_{X_T}
\end{eqnarray}
on $X_{T,M}$ if $T$ is sufficiently small.
By Theorem \ref{mso},  we see that 
\bea\label{ce}
\|\mathcal{J}(u(\cdot, t)- \mathcal{J}(v(\cdot,t))\|_{M^{1,1}} &\leq& 
\int_{0}^{t}\|U(t-\tau) \, [F(u(\cdot, \tau)) -F(v(\cdot,\tau))]\|_{M^{1,1}}  \, d\tau \nonumber \\
&\leq &  \int_{0}^{t}\|F(u(\cdot, \tau)) -F(v(\cdot,\tau))\|_{M^{1,1}}  \, d\tau,
\eea
 By Proposition \ref{Fu-Fv} this is atmost $$2  \int_0^t  \|u-v\|_{M^{1,1}}   \left[ 
  \left(\widetilde{\partial_x F} +\widetilde{\partial_y F} \right) (\|u\|_{M^{1,1}}+\|v\|_{M^{1,1}}, \|u\|_{M^{1,1}}+\|v\|_{M^{1,1}}) \right] d \tau.$$ Now taking supremum over all $t \in [0,T]$, we see that 
 \bea\label{ju-jv}
 \|\mathcal{J}(u) &-& \mathcal{J}(v)\|_{X_T} \nonumber \\
&\leq& 2T \|u-v\|_{X_T} \left( \widetilde{\partial_x F} +\widetilde{\partial_y F} \right) (\|u\|_{X_T}+\|v\|_{X_T}, \|u\|_{X_T}+\|v\|_{X_T}).\eea
Now if $u$ and $v$ are in $X_{T,M}$, the RHS of \eqref {ju-jv} is at most 
\bea\label{contra28}  2T \|u-v\|_{X_T}  \left( \widetilde{\partial_x F} +\widetilde{\partial_y F} \right) (2M, 2M)\leq  
\frac{ \|u-v\|_{X_T} }{2}\eea
for all $T\leq T_2$, where 
\bea\label{T2}
T_2= \min \left\{1, \left[4 C_1 \left( \widetilde{\partial_x F} +\widetilde{\partial_y F} \right) (2M, 2M)\right]^{-1} \right\}.
\eea

Thus from \eqref{contra28}, we see that the estimate \eqref{pki} holds for all $T<T_2$.  
Now choosing $T^1 = \min \{ T_1,T_2\}$ where $T_1$ is given by  \eqref{T1}, so that
both the inequalities \eqref{mapto} and \eqref{pki} are valid for $T<T^1$. Hence for such
a choice of $T$, ${\mathcal J} $ is a contraction on the Banach space $X_{T,M}$ and hence has a unique fixed point in $X_{T,M}$, by the  Banach's contraction mapping principle. Thus  we conclude that $\mathcal{J}$ has a unique fixed point in $X_{T, M}$ which is a solution of \eqref{HTEH} on $[0,T]$ for any $T<T^1$. Note that 
$T^1$ depends on $\|u_0\|_{M^{1,1}}$.

The arguments above also give the solution for the initial data corresponding to any given time $t_0$, on an interval $[t_0, t_0 +T^1]$ where $T^1$ is given by the same formula with  $\|u(0)\|_{M^{1,1}}$ replaced by $\|u(t_0)\|_{M^{1,1}}$. In other words, the dependence of  the length of the interval of existence on the initial time $t_0$, is only through the norm $\|  u(t_0)\|_{M^{1,1}}$. Thus if the solution exists on $[0,T']$ and if $\|u(T')\|_{M^{1,1}}<\infty$, the above arguments can be carried out again for the initial value problem with the new initial data $u(T')$ to extend the solution  to the larger interval $[0, T'']$. This procedure can be continued and hence we get a solution on maximal interval $[0,T^*).$ This completes the proof.  
\end{proof}
\subsection{Local well-posedness in $M^{p,q}$ with potentials in  $\mathcal{F}L^q$ and   $M^{1,\infty}$}\label{lwhp}
In this section, we prove Theorem  \ref{wht1}.   We start by following elementary 
\begin{Proposition}\label{lsd} If $0<p<q<r \leq \infty,$ then $L^{q}(\R^d) \subset L^p(\R^d)+L^r(\R^d),$ that is, each $f\in L^q(\R^d)$ is the sum of function in $L^p(\R^d)$ and a function in $L^r(\R^d).$
\end{Proposition}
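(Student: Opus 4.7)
The plan is to prove the proposition by a standard level-set truncation. The intuition is that for $f\in L^q(\R^d)$, the set where $|f|$ is large has finite measure (by Chebyshev) and the restriction of $f$ there lies in any smaller $L^p$ with $p<q$; conversely, on the complementary set $|f|\le 1$ is bounded and the function sits comfortably in any larger $L^r$ with $r>q$.

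Concretely, given $f\in L^q(\R^d)$, I would fix the threshold $1$ (any positive constant works) and set $A=\{x\in\R^d:|f(x)|>1\}$. Write $f=g+h$ with $g=f\chi_A$ and $h=f\chi_{\R^d\setminus A}$. On $A$ we have $|g|>1$ and $p<q$, hence $|g|^{p}\le |g|^{q}$ pointwise; integrating yields
\[
\|g\|_{L^p}^{p} \;\le\; \int_{A}|f|^{q}\,dx \;\le\; \|f\|_{L^q}^{q}\;<\;\infty,
\]
so $g\in L^{p}(\R^d)$. For the other piece: if $r<\infty$, then on $\R^d\setminus A$ we have $|h|\le 1$ together with $r>q$, which forces $|h|^{r}\le |h|^{q}$ pointwise, and the same computation gives $\|h\|_{L^r}^{r}\le\|f\|_{L^q}^{q}<\infty$; if $r=\infty$, then trivially $\|h\|_{L^\infty}\le 1$. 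In either case $h\in L^{r}(\R^d)$, and the decomposition $f=g+h$ witnesses the claimed inclusion.

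There is no genuine obstacle here; the entire argument is a one-line exercise in Chebyshev-style truncation, and in my proof outline the only ``choice'' to justify is that the threshold is arbitrary. The reason this elementary fact is singled out, I suspect, is that the subsequent proof of Theorem \ref{wht1} will apply it to split the Fourier transform of a kernel into two pieces lying in different $L^{p}$ spaces, so that Theorem \ref{pl} and the Hausdorff--Young embeddings of Lemma \ref{rl}\eqref{rcs} can be applied to each piece separately — exactly mirroring the decomposition $\widehat{K}=k_{1}+k_{2}$ employed earlier in Section \ref{pmt} to handle the Hartree potential.
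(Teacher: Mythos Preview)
Your argument is correct and essentially identical to the paper's: both use the level-set truncation at threshold $1$, setting $E=\{|f|>1\}$, $g=f\chi_E$, $h=f\chi_{E^c}$, and bounding $|g|^p\le |f|^q\chi_E$ and $|h|^r\le |f|^q\chi_{E^c}$. Your explicit treatment of the case $r=\infty$ and your remark about the downstream use of the decomposition are accurate additions.
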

\begin{proof}
If $f\in L^q(\R^d),$ let $E= \{ x:|f(x)|>1 \}$ and set $g= f\chi_{E}$ and $h= f\chi_{E^{c}}.$ Then $|g|^p= |f|^p \chi_{E} \leq |f|^q \chi_{E},$ so $g\in L^p(\R^d),$ and $|h|^r= |f|^{r}\chi_{E^c} \leq |f|^q \chi_{E^c},$ so  $h\in L^r(\R^d).$
\end{proof}

\begin{Lemma}\label{iml} Let  $K\in \mathcal{F}L^q(\mathbb R^d),$  $k\in \mathbb N,$ and $1\leq p \leq 2.$  Then 
\begin{enumerate}
\item  \label{gn1} $ \|(K\ast |f|^{2k}) f\|_{M^{1,1}} \lesssim\|f\|_{M^{1,1}}^{2k+1} \ \  \text{for} \ f\in M^{1,1}(\mathbb R^d)  \   \text{and} \  \ 1< q < \infty.$

\item \label{gn3} $ \|(K\ast |f|^{2}) f\|_{M^{p,\frac{2r}{2r-1}}} \lesssim\|f\|_{M^{p,\frac{2r}{2r-1}}}^{3} \ \  \text{for} \ f\in M^{p,1}(\mathbb R^d) \   \text{and} \  \ 1< q < 2,  q< r.$

\item \label{gn5}  $ \|(K\ast |f|^{2k}) f\|_{M^{p,1}} \lesssim\|f\|_{M^{p,1}}^{2k+1} \ \  \text{for} \ f\in M^{p,1}(\mathbb R^d)  \ (1\leq  p \leq  \infty)$ and $K\in M^{1, \infty}(\mathbb R^d) \supset L^{1}(\mathbb R^d)$  $ (L^{1}(\mathbb R^d) \subset \mathcal{F}L^1(\mathbb R^d) )$.

\end{enumerate}
\end{Lemma}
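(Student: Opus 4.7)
The approach for all three parts follows a common template: reduce the trilinear bound on $(K\ast|f|^{2k})f$ to a linear bound on $K\ast|f|^{2k}$ in a Banach algebra $A$ that acts by pointwise multiplication on the target modulation space, and then use that $M^{p,1}$ (in particular $M^{1,1}$) is itself a pointwise Banach algebra to dominate $\||f|^{2k}\|_{M^{p,1}}$ by $\|f\|_{M^{p,1}}^{2k}$. For parts \eqref{gn1} and \eqref{gn3} one takes $A=\mathcal F L^1(\R^d)$, via the module bound
\[
\|(K\ast|f|^{2k})f\|_{M^{p,q}}\lesssim\|K\ast|f|^{2k}\|_{\mathcal F L^1}\,\|f\|_{M^{p,q}}
\]
from Theorem \ref{pl}\eqref{mp}. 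For part \eqref{gn5} one uses instead $A=M^{\infty,1}(\R^d)$ via the analogous inclusion $M^{\infty,1}\cdot M^{p,1}\hookrightarrow M^{p,1}$, also a special case of Theorem \ref{pl}\eqref{mp}.

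In part \eqref{gn1}, since $\hat K\in L^q(\R^d)$ with $1<q<\infty$, Proposition \ref{lsd} lets me split $\hat K=k_1+k_2$ with $k_1\in L^1$ and $k_2\in L^\infty$. H\"older gives
\[
\|K\ast|f|^{2k}\|_{\mathcal F L^1}=\|\hat K\,\widehat{|f|^{2k}}\|_{L^1}\le \|k_1\|_{L^1}\|\widehat{|f|^{2k}}\|_{L^\infty}+\|k_2\|_{L^\infty}\|\widehat{|f|^{2k}}\|_{L^1}.
\]
I bound the first factor by $\||f|^{2k}\|_{L^1}\lesssim\|f\|_{M^{1,1}}^{2k}$ using $M^{1,1}\hookrightarrow L^1\cap L^\infty$ (Lemma \ref{rl}\eqref{el} together with interpolation), and the second by $\||f|^{2k}\|_{\mathcal F L^1}\lesssim\|f\|_{M^{1,1}}^{2k}$ using the chain $M^{1,1}\hookrightarrow M^{2,1}\hookrightarrow \mathcal F L^1$ from Lemma \ref{rl}\eqref{ir} and \eqref{rcs}, together with the algebra property of $M^{1,1}$ (Theorem \ref{pl}\eqref{mp}) and Lemma \ref{rl}\eqref{ic}. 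Part \eqref{gn3} is structurally identical: the assumption $1<q<2$ lets me apply Proposition \ref{lsd} with $k_1\in L^1$ and $k_2\in L^r$ for the prescribed $r>q$; H\"older followed by Young's inequality on $\widehat{|f|^{2}}=(2\pi)^{-d/2}\hat f\ast\widehat{\bar f}$ converts the nontrivial factor into $\|\hat f\|_{L^{2r/(2r-1)}}^{2}$, which Lemma \ref{rl}\eqref{rcs} (yielding $M^{2,2r/(2r-1)}\hookrightarrow \mathcal F L^{2r/(2r-1)}$), Lemma \ref{rl}\eqref{ir}, and Lemma \ref{rl}\eqref{ic} rewrite as $\|f\|_{M^{p,2r/(2r-1)}}^{2}$. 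I read ``$f\in M^{p,1}$'' in the statement of \eqref{gn3} as a typographical slip for $f\in M^{p,2r/(2r-1)}$, which is the space in which $f$ is actually measured on the right-hand side.

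For part \eqref{gn5} the Fourier--Lebesgue step is replaced by Sj\"ostrand's class. The convolution inclusion $M^{1,\infty}\ast M^{\infty,1}\hookrightarrow M^{\infty,1}$---a special case of Theorem \ref{pl}(2), and the same inclusion already used in the proof of Theorem \ref{mtv}---gives
\[
\|K\ast|f|^{2k}\|_{M^{\infty,1}}\lesssim\|K\|_{M^{1,\infty}}\,\||f|^{2k}\|_{M^{\infty,1}}.
\]
Since $M^{p,1}\hookrightarrow M^{\infty,1}$ by Lemma \ref{rl}\eqref{ir} and since $M^{p,1}$ is a Banach algebra, $\||f|^{2k}\|_{M^{\infty,1}}\lesssim \|f\|_{M^{p,1}}^{2k}$. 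Composing with the module bound $\|(K\ast|f|^{2k})f\|_{M^{p,1}}\lesssim \|K\ast|f|^{2k}\|_{M^{\infty,1}}\|f\|_{M^{p,1}}$ closes the estimate.

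The only genuinely delicate step is the exponent bookkeeping in part \eqref{gn3}: the auxiliary parameter $r$ must simultaneously exceed $q$ (so that Proposition \ref{lsd} produces $k_2\in L^r$), match the symmetric Young exponent $\tfrac{2r}{2r-1}$ coming from $\hat f\ast\widehat{\bar f}$, and fit the Fourier--Lebesgue/modulation embedding of Lemma \ref{rl}\eqref{rcs}; the hypothesis $q<r<\infty$ is precisely what makes these three constraints simultaneously satisfiable. Once this is checked, the remaining arithmetic follows the Hartree-case template of Section \ref{pmt}.
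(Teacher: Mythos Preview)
Your proposal is correct and follows essentially the same route as the paper: in parts \eqref{gn1} and \eqref{gn3} you split $\hat K$ via Proposition \ref{lsd} into $L^1+L^\infty$ (resp.\ $L^1+L^r$) and pass through the $\mathcal F L^1$-module structure, and in part \eqref{gn5} you use the $M^{\infty,1}$-module structure together with the convolution inclusion $M^{1,\infty}\ast M^{\infty,1}\hookrightarrow M^{\infty,1}$, exactly as the paper does (the paper's one-line version of \eqref{gn5} suppresses the intermediate $\|K\ast|f|^{2k}\|_{M^{\infty,1}}$ step that you spell out). The only cosmetic difference is in \eqref{gn1}, where the paper bounds $\|\widehat{|f|^{2k}}\|_{L^1}$ by invoking the Fourier invariance $\mathcal F:M^{1,1}\to M^{1,1}$ (Lemma \ref{rl}\eqref{fi}) and $M^{1,1}\hookrightarrow L^1$, whereas you go through $M^{1,1}\hookrightarrow M^{2,1}\hookrightarrow \mathcal F L^1$; both routes yield $\||f|^{2k}\|_{M^{1,1}}$ and then close via the algebra property. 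Your observation that ``$f\in M^{p,1}$'' in \eqref{gn3} should read $f\in M^{p,\frac{2r}{2r-1}}$ is also consistent with how the lemma is applied in Theorem \ref{wht1}(2).
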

\begin{proof} 
Let $K \in  \mathcal{F} L^q(\mathbb R^d) (1< q \leq  \infty).$ Then by  Proposition \ref{lsd}, we get $k_1 \in L^1(\mathbb R^d)$ and $k_2 \in L^{\infty}(\R^d)$ so that
\begin{eqnarray}\label{f1}
\widehat{K}=k_1+ k_2.
\end{eqnarray}
By Theorem \ref{pl}, \eqref{f1},  H\"older's inequality, Lemma \ref{rl}\eqref{el}, Lemma \ref{rl}\eqref{fi}, and  Lemma \ref{rl}\eqref{ic}, we obtain
\begin{eqnarray}
\|(K\ast |f|^{2k}) f)\|_{M^{1,1}} & \lesssim  & \|K\ast |f|^{2k}\|_{\mathcal{F}L^{1}} \|f\|_{M^{1,1}}  \lesssim  \left(\|k_{1} \widehat{|f|^{2k}}\|_{L^{1}} + \|k_{2} \widehat{|f|^{2k}}\|_{L^{1}} \right) \|f\|_{M^{1,1}} \nonumber \\
& \lesssim &  \left( \|k_{1}\|_{L^{1}} \| \widehat{ |f|^{2k}}\|_{L^{\infty}}  + \|k_{2}\|_{L^{\infty}} \|\widehat{|f|^{2k}}\|_{L^{1}} \right) \|f\|_{M^{1,1}}\nonumber\\
& \lesssim & \left( \||f|^{2k}\|_{L^{1}} + \|\widehat{|f|^{2k}}\|_{M^{1,1}} \right) \|f\|_{M^{1,1}} \lesssim  \| |f|^{2k}\|_{M^{1,1}} \|f\|_{M^{1,1}} \nonumber \\
& \lesssim &  \|f\|_{M^{1,1}}^{2k+1}.\nonumber
\end{eqnarray}
This completes the proof of statement \eqref{gn1}.  For statement \eqref{gn3}, we note that 
  by Proposition \ref{lsd}, we get $k_1 \in L^1(\mathbb R^d)$ and $k_2 \in L^{r}(\R^d) \ (1<q<r \leq 2)$ so that
  \begin{eqnarray}\label{gn2d}
  \widehat{K}=k_1+ k_2.
  \end{eqnarray}
   Let $\frac{1}{r}+ \frac{1}{r'}=1$ and so  $\frac{1}{r'}+1= \frac{1}{r_1}+ \frac{1}{r_2},$ where $r_1=r_2:= \frac{2r}{2r-1} \in [1,2]$, and $r_1' \in [2, \infty]$ where $\frac{1}{r_1}+ \frac{1}{r_1^{'}}=1.$ 
Now  using  Young's inequality for convolution, Lemma \ref{rl} \eqref{rcs}, Lemma \ref{rl} \eqref{ir}, and Lemma \ref{rl} \eqref{ic},  we obtain
\begin{eqnarray}
\|k_2\widehat{|f|^2} \|_{L^1} & \leq & \|k_2\|_{L^r}  \|\widehat{|f|^2}\|_{L^{r'}} \nonumber\\
& \lesssim &  \|\hat{f} \ast \hat{\bar{f}}\|_{L^{r'}}\nonumber\\
& \lesssim & \|\hat{f}\|_{L^{r_1}} \|\hat{\bar{f}}\|_{L^{r_1}} \nonumber \\
& \lesssim &  \|f\|^2_{M^{\min\{r_1', 2\}, r_1}}  \lesssim \|f\|^2_{M^{2, r_1}}\nonumber
\end{eqnarray}
Since  $f:[\frac{d}{d-\gamma}, \infty]\to \mathbb R, f(r) = \frac{2r}{2r-1}$ is a decreasing function, by  Lemma \ref{rl}\eqref{ir}, we have
\begin{eqnarray}\label{gnd33}
\|k_2 \widehat{|f|^2}\|_{L^1}\lesssim  \|f\|^2_{M^{2, r_1}} \lesssim \|f\|^2_{M^{p,q}}. 
\end{eqnarray}
Note that  $\|k_1 \widehat{|f|^2}\|_{L^1} 
 \lesssim  \|f\|_{M^{p,q}}^2$ (see \eqref{md2}). 
 For statement \eqref{gn5},  we note that by Theorem \ref{pl}, we  have 
 \[ \|(K\ast |f|^{2k}) f\|_{M^{p,1}} \lesssim  \||f|^{2k}\|_{M^{\infty, 1}} \|f\|_{M^{p,1}} \lesssim \|f\|_{M^{p,1}}^{2k+1}.\]
\end{proof}
\begin{Lemma}\label{imlc} The following statements are true.
\begin{enumerate}
\item  For $ K\in \mathcal{F}L^q(\mathbb R^d) \ (1 <q < \infty), k \in \mathbb N$ and  $f\in M^{1,1}(\R^d),$ we have 
\begin{multline*} 
\| (K\ast |f|^{2k}) f- (K\ast |g|^{2k}) g\|_{M^{1,1}} \lesssim  \\
   (\|f\|_{M^{1,1}}^{2k}+\|f\|_{M^{1,1}}^{2k-1}\|g\|_{M^{1,1}} + \cdots + \|g\|_{M^{1,1}}^{2k-1}\|f\|_{M^{1,1}} +  \|g\|_{M^{1,1}}^{2k} ) \|f-g\|_{M^{1,1}}.
\end{multline*}

\item  For $ K\in \mathcal{F}L^q(\mathbb R^d) \ (1 <q < r\leq 2),$ and  $f\in M^{p,\frac{2r}{2r-1}}(\R^d),$ we have 
\begin{multline*} 
\| (K\ast |f|^{2}) f- (K\ast |g|^{2}) g\|_{M^{p,\frac{2r}{2r-1}}} \lesssim  \\
   (\|f\|_{M^{p,\frac{2r}{2r-1}}}^{2}+\|f\|_{M^{p,\frac{2r}{2r-1}}}\|g\|_{M^{p,\frac{2r}{2r-1}}}+ \|g\|_{M^{p,\frac{2r}{2r-1}}}^{2}) \|f-g\|_{M^{p,\frac{2r}{2r-1}}}.
   \end{multline*}

\item  For $K \in M^{1, \infty}(\R^d), k \in \mathbb N$ and  $f\in M^{p,1}(\R^d) \ (1\leq p \leq \infty),$ we have\begin{multline*} 
\| (K\ast |f|^{2k}) f- (K\ast |g|^{2k}) g\|_{M^{p,1}}  \lesssim  \\
   (\|f\|_{M^{p,1}}^{2k}+\|f\|_{M^{p,1}}^{2k-1}\|g\|_{M^{p,1}} + \cdots + \|g\|_{M^{1,1}}^{2k-1}\|f\|_{M^{p,1}} +  \|g\|_{M^{p,1}}^{2k} ) \|f-g\|_{M^{p,1}}.
\end{multline*}
\end{enumerate}
\end{Lemma}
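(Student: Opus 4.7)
The plan is to mimic the telescoping strategy already used in the proof of Lemma \ref{cl}, now extended to arbitrary $k$ and to the three different ambient spaces listed. For every part, the starting point is the algebraic identity
\begin{equation*}
(K\ast |f|^{2k})f - (K\ast |g|^{2k})g = (K\ast |f|^{2k})(f-g) + \bigl[K\ast(|f|^{2k} - |g|^{2k})\bigr]\, g,
\end{equation*}
which generalises \eqref{inpc}. The first summand is bounded by repeating the argument that produced the corresponding item of Lemma \ref{iml}: in parts (1) and (2) through the $\mathcal{F}L^{1}$-module property of Theorem \ref{pl}, and in part (3) through the $M^{\infty,1}$-pointwise-multiplier property combined with the convolution mapping used in Lemma \ref{iml}\eqref{gn5}; the only change is that one factor of $\|f\|$ is replaced by $\|f-g\|$.

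For the second summand, the key device is the telescoping identity
\begin{equation*}
|f|^{2k}-|g|^{2k}=(f\bar f)^{k}-(g\bar g)^{k}=\sum_{j=0}^{k-1}(f\bar f)^{j}\bigl[(f-g)\bar f + g(\bar f-\bar g)\bigr](g\bar g)^{k-1-j},
\end{equation*}
which expresses the difference as a finite sum of $2k$-fold products, each containing exactly one factor of $(f-g)$ or $(\bar f-\bar g)$ and $2k-1$ factors drawn from $\{f,\bar f,g,\bar g\}$. Substituting this into $K\ast(|f|^{2k}-|g|^{2k})$ and applying the single-function estimates from the proof of Lemma \ref{iml} term by term, together with the conjugation invariance of Lemma \ref{rl}\eqref{ic}, yields a bound of the form
\begin{equation*}
\bigl\|K\ast(|f|^{2k}-|g|^{2k})\bigr\|_{\star} \lesssim \bigl(\|f\|^{2k-1}+\|f\|^{2k-2}\|g\|+\cdots+\|g\|^{2k-1}\bigr)\|f-g\|
\end{equation*}
in the appropriate norm $\star$ (namely $\mathcal{F}L^{1}$ for parts (1) and (2), $M^{\infty,1}$ for part (3)). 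Multiplying by $\|g\|$ and combining with the first-summand estimate through the triangle inequality produces the three claimed inequalities.

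The only genuinely nontrivial obstacle is part (2): to bring the right-hand side back into the $M^{p,\frac{2r}{2r-1}}$ norm, one has to split $\widehat{K} = k_{1}+k_{2}$ with $k_{1}\in L^{1}$ and $k_{2}\in L^{r}$ via Proposition \ref{lsd} (using $1<q<r\leq 2$), estimate $\|k_{2}\,\widehat{|f|^{2}-|g|^{2}}\|_{L^{1}}$ through Young's convolution inequality, and then pass from Fourier--Lebesgue to modulation norms using the embedding in Lemma \ref{rl}\eqref{rcs} and the monotonicity in Lemma \ref{rl}\eqref{ir}. This is the same delicate book-keeping that underlies Lemma \ref{iml}\eqref{gn3}; once it is carried out separately for the two telescoped pieces $(f-g)\bar f$ and $g(\bar f-\bar g)$, the remainder of the argument is routine and already modelled on Lemma \ref{cl}.
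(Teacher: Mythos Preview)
Your approach is essentially identical to the paper's own proof: the paper also starts from the splitting $(K\ast|f|^{2k})f-(K\ast|g|^{2k})g=(K\ast|f|^{2k})(f-g)+[K\ast(|f|^{2k}-|g|^{2k})]g$ together with the telescoping identity $x^{k}-y^{k}=(x-y)\sum_{n=0}^{k-1}x^{k-1-n}y^{n}$ (applied with $x=f\bar f$, $y=g\bar g$), and then simply refers to the arguments of Lemmas~\ref{iml} and~\ref{cl}, Lemma~\ref{rl}, and Theorem~\ref{pl}, omitting all further details. Your write-up is in fact more explicit than the paper's, spelling out the further decomposition $f\bar f-g\bar g=(f-g)\bar f+g(\bar f-\bar g)$ and the specific book-keeping needed in part~(2), but the underlying strategy is the same.
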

\begin{proof} We notice the identities
\begin{eqnarray*}
(K\ast |f|^{2k})f- (K\ast |g|^{2k})g= (K\ast |f|^{2})(f-g) + (K \ast (|f|^{2k}- |g|^{2k}))g
\end{eqnarray*}
and
\[ x^k- y^k = (x-y) \sum_{n=0}^{k-1} x^{k-1-n}y^n \  \ (x, y\geq 0, k \in \mathbb N).\]

Now exploiting ideas from Lemma \ref{iml}, Lemma \ref{cl},  and in view of  the above identities,  Lemma \ref{rl}, and Proposition \ref{pl},  the proofs can be produced. We omit the details. 
\end{proof}

\begin{proof}[Proof of Theorem \ref{wht1}] In view of Theorem \ref{mso}, Lemmas \ref{iml} and \ref{imlc}, the Banach contraction principle gives the desired result. The details are omitted.
\end{proof}

\noindent
{\textbf{Acknowledgment}:} DGB is very grateful to Professor Sundaram Thangavelu for introducing to him to Hermite multiplier on modulation spaces.   DGB is very grateful to Professor  Kasso Okoudjou for hosting and  arranging research facilities at the University of Maryland. DGB is very grateful to Professor R\'emi Carles  for  his suggestions and for pointed out refernces  \cite{pc, ec}.  DGB is  thankful to  SERB Indo-US Postdoctoral Fellowship (2017/142-Divyang G Bhimani) for the financial support. DGB is also thankful to DST-INSPIRE and TIFR CAM for the  academic leave.


\begin{thebibliography}{99}

\bibitem{baho} H. Bahouri, J-Y. Chemin, and R. Danchin, Fourier analysis and nonlineare
partial differential equations, Grundlehren der Mathematischen Wissenschaften [Fundamental
Principles of Mathematical Sciences], vol. 343, Springer, Heidelberg, 2011.

\bibitem{benyi} \'A. B\'enyi, K. Gr\"ochenig, K. A. Okoudjou, and L. G. Rogers, Unimodular Fourier multipliers for modulation spaces, J. Funct. Anal. 246 (2) (2007) 366-384.

\bibitem{ambenyi} \'A. B\'enyi, Kasso A. Okoudjou, Local well-posedness of nonlinear dispersive equations on modulation spaces, Bull. Lond. Math. Soc. 41 (3) (2009) 549-558.


\bibitem{dgb}D.G. Bhimani, The Cauchy problem for the Hartree type equation in modulation spaces, Nonlinear Anal. 130 (2016), 190-201. 


\bibitem{sdgb} D.G.Bhimani, Global well-posedness for  fractional  Hartree  equation on modulation spaces and Fourier algebra, available  at 	arXiv:1810.04076. 



\bibitem{dgb-pkr} D.G. Bhimani, P. K. Ratnakumar, Functions operating on modulation spaces and nonlinear dispersive equations, J. Funct. Anal. 270 (2)  (2016),621-648. 



\bibitem{drt} D.G. Bhimani, R. Balhara, S. Thangavelu,  Hermite Multipliers on Modulation Spaces, 
	arXiv:1712.03364, To appear in  ``Springer volume Analysis in Developing Countries".


\bibitem{bsh} C. C. Bradley, C. A. Sackett,  R. G. Hulet, Bose-Einstein condensation of lithium: observation of limited condensate number, Phys. Rev. Lett. 78 (1997), 985-989.


\bibitem{rc1} R. Carles,  Remarks on nonlinear Schr\"odinger equations with harmonic potential, Ann. Henri Poincar\'e 3:757-772.

\bibitem{rc} R. Carles, Nonlinear Schr\"odinger equation with time dependent potential,  Commun. Math. Sci. 9 (1)  (4) (2011), 937-964. 


\bibitem{rc2} R. Carles,  Nonlinear Schr\"odinger equations with repulsive harmonic potential and applications. SIAM J. Math. Anal. 35:823-843, 2003.

\bibitem{rc3} R. Carles,  N. Mauser, H.P.  Stimming,  (Semi)classical limit of the Hartree equation with harmonic potential, SIAM J. Appl. Math. 66 (1)(2005), 29-56. 

	
\bibitem{pc}	P. Cao, R. Carles, Semi-classical wave packet dynamics for Hartree equations, Rev. Math. Phys. 23 (9) (2011)  933-967.

\bibitem{cks} J. Cunanan, M. Kobayashi,  M. Sugimoto, Mitsuru,
Inclusion relations between $L^p-$Sobolev and Wiener amalgam spaces,
J. Funct. Anal. 268 (1)(2015) 239-254. 



\bibitem{ec} E. Cordero, F.  Nicola,  On the Schr\"odinger equation with potential in modulation spaces, J. Pseudo-Differ. Oper. Appl. 5 (3) (2014),  319-341.


\bibitem{dgp} F. Dalfovo, S. Giorgini, P. L. Pitaevskii and S. Stringari, Theory of Bose-Einstein condensation in trapped gases, Rev. Modern Phys. 71(3) (1999), 463-512.


\bibitem{HG4} H.G. Feichtinger, Modulation spaces on locally compact abelian groups, Technical report, University of Vienna, 1983.

	
\bibitem{gro} K. Gr\"{o}chenig, Foundations of Time-Frequency Analysis. Birkh\"auser Boston, Inc., Boston, MA, 2001.	

		
\bibitem{kks} K. Kato, M. Kobayashi, S. Ito, Remark on wave front sets of solutions to Schr\"odinger equation of a free particle and a harmonic oscillator,
SUT J. Math. 47 (2) (2011), 175-183. 	

	
\bibitem{ks} M. Kobayashi, M. Sugimoto, The inclusion relation between Sobolev and modulation spaces, J. Funct. Anal. 260 (11) (2011), 3189-3208. 	

\bibitem{krp} K. Okoudjou,  Embedding of some classical Banach spaces into modulation spaces, Proc. Amer. Math. Soc. 132 (6) (2004), 1639-1647. 


\bibitem{rsw} M. Ruzhansky, M. Sugimoto, B. Wang,  Modulation spaces and nonlinear evolution
equations,  Progr Math, 2012, 301: 267-283.


\bibitem{ts} T. Saanouni, 
Global well-posedness and instability of a nonlinear Schr\"odinger equation with harmonic potential, J. Aust. Math. Soc. 98  (1) (2015), 78-103. 


\bibitem{msnt} M. Sugimoto, N. Tomita,
The dilation property of modulation spaces and their inclusion relation with Besov spaces, 
J. Funct. Anal. 248 (1) (2007), 79-106.


\bibitem{ST}
S. Thangavelu,  Lectures on Hermite and Laguerre expansions, vol. 42 of Mathematical Notes. Princeton University Press, Princeton (1993). 


\bibitem{JT} J. Toft, Continuity properties for modulation spaces, with applications to pseudo-differential calculus. I. J. Funct. Anal. 207 (2)(2004), 399-429. 


\bibitem{tw} T. Tsurumi, M. Wadati, Collapses of wave functions in multidimensional nonlinear Schr\"odinger equations under harmonic potential, Phys. Soc. Japan 66 (1997), 3031-3034.



\bibitem{wang2} B.Wang,  H. Hudzik, The global Cauchy problem for the NLS and NLKG with small rough data, J. Differential Equations 232 (2007) 36-73.

\bibitem{bao} B. Wang, Z. Lifeng, and G. Boling, Isometric decomposition operators, function spaces $E^{\lambda}_{p,q}$ and applications to nonlinear evolution equations, J. Funct. Anal. 233 (1) (2006) 1-39.

\bibitem{baob} B. Wang, H. Zhaohui,
H. Chengchun, G. Zihua, Harmonic analysis method for nonlinear evolution equations. I. World Scientific Publishing Co. Pte. Lt., 2011.

\bibitem{jz} J. Zhang,  Sharp threshold for blowup and global existence in nonlinear Schr\"odinger equations under a harmonic potential, Comm. Partial Differential Equations 30 (2005), no. 10-12, 1429-1443. 

\end{thebibliography}
\end{document}